\renewcommand{\mathcal}{\mathscr}
\def\R {\mathbb{R}}
\def\N {\mathbb{N}}
\def\Z {\mathbb{Z}}
\renewcommand{\epsilon}{\varepsilon}
\newcommand{\eps}{\varepsilon}
\renewcommand{\leq}{\leqslant}
\renewcommand{\le}{\leqslant}
\renewcommand{\geq}{\geqslant}
\renewcommand{\ge}{\geqslant}
\DeclareMathOperator*{\osc}{osc}
\newcommand{\Per}{\mathrm{Per}}
\newtheorem{proposition}{Proposition}[section]
\newtheorem{theorem}[proposition]{Theorem}
\newtheorem*{theorem*}{Theorem}
\newtheorem{lemma}[proposition]{Lemma}
\theoremstyle{definition}
\newtheorem{definition}[proposition]{Definition}
\newtheorem{remark}[proposition]{Remark}
\numberwithin{equation}{section}
\begin{document}

\title{Minimizers of the $p$-oscillation functional}\thanks{This work
has been supported by the Australian Research Council grant
``N.E.W.'' {\em Nonlocal Equation at Work}.\\
{\em Annalisa Cesaroni}:
Dipartimento di Scienze Statistiche,
Universit\`a di Padova, Via Battisti 241/243, 35121 Padova, Italy. {\tt annalisa.cesaroni@unipd.it}
\\
{\em Serena Dipierro}:
Dipartimento di Matematica, Universit\`a di Milano,
Via Saldini 50, 20133 Milan, Italy.
{\tt serena.dipierro@unimi.it}\\
{\em Matteo Novaga}: Dipartimento di Matematica,
Universit\`a di Pisa, 
Largo Pontecorvo 5, 56127 Pisa,
Italy. {\tt matteo.novaga@unipi.it}\\
{\em Enrico Valdinoci}:
School of Mathematics
and Statistics,
University of Melbourne, 813 Swanston St, Parkville VIC 3010, Australia,
Dipartimento di Matematica, Universit\`a di Milano,
Via Saldini 50, 20133 Milan, Italy, and IMATI-CNR, Via Ferrata 1, 27100 Pavia,
Italy. {\tt enrico@mat.uniroma3.it}}
\author[A. Cesaroni]{Annalisa Cesaroni}
\author[S. Dipierro]{Serena Dipierro}
\author[M. Novaga]{Matteo Novaga}
\author[E. Valdinoci]{Enrico Valdinoci}

\subjclass[2010]{
35B05, %Oscillation, zeros of solutions, mean value theorems, etc
49Q20, %Variational problems in a geometric measure-theoretic setting
35B53%Liouville theorems, Phragm?en-Lindel?of theorems
}
\keywords{$p$-oscillation functional, Minkowski content, discrete total variation functional.}

\begin{abstract}  
We define a family of functionals, called $p$-oscillation functionals, 
that  can be interpreted as discrete versions of the classical total variation functional for $p=1$ and of the 
$p$-Dirichlet functionals for $p>1$.  We introduce the notion of minimizers and prove existence of solutions to the Dirichlet problem. Finally
we provide a description of Class~A minimizers (i.e. minimizers under compact perturbations) in dimension $1$. 
\end{abstract}

\maketitle

\begin{center}
{\calligra\Large To Luis Caffarelli, on the occasion of his 70th birthday}
\end{center}\bigskip

\tableofcontents
\section{Introduction}

Given~$\Omega\subseteq\R^n$, the classical quadratic Dirichlet energy
\begin{equation} \label{DIRI:2}   \int_\Omega |\nabla u(x)|^2\,dx\end{equation}
and its generalization to any homogeneous energy of the form
\begin{equation} \label{DIRI:p}
\int_\Omega |\nabla u(x)|^p\,dx,\qquad p\in[1,+\infty)\end{equation}
constitute the foundation of the modern analysis and of the calculus of variations
(see e.g. the Introduction in~\cite{COURANT77} for a detailed historical overview).
In particular, the minimization of the functional in~\eqref{DIRI:2} with prescribed boundary data is related to harmonic functions, while the functional in~\eqref{DIRI:p} gives rise to the
$p$-Laplace operator. In general, the functionals in~\eqref{DIRI:2} and~\eqref{DIRI:p} are the main building blocks for a number of problems in elasticity, heat conduction, population dynamics, etc.

In the recent years, suitable generalizations of
the functionals in~\eqref{DIRI:2} and~\eqref{DIRI:p} have been taken into account in the literature,
with the aim of modeling situations in which different scales come into play. Besides the natural mathematical curiosity, this type of problems is motivated by several concrete applications in which the setting is not scale invariant: for instance, in the digitalization process of images with tiny details
(e.g. fingerprints, tissues, layers, etc.) the use of
different scales allows the preservation of fine structures, precise elements and irregularities of the image in the process of removing white noises, and this constitutes an essential ingredient in the process of improving the quality of the data without losing important information.\medskip

In this paper, we consider a discrete version of the functionals
in~\eqref{DIRI:2} and~\eqref{DIRI:p} in which the gradient is replaced by an oscillation term
in a ball of fixed radius. On the one hand, this new functional retains the property of 
attaining minimal value on constant functions, hence oscillatory functions cause an increasing of the energy values. On the other hand, this new type of functionals is nonlocal, since any modification
of the function at a given point influences the energy density in a fixed ball.
Differently than other kinds of nonlocal functions studied in the literature, the one that we study
here is not scale invariant,
since the radius of the ball on which the oscillation is computed provides a natural threshold of relevant magnitudes.\medskip

More precisely, the mathematical framework in which we work is the following.
For any function $u\in L^1_{\rm loc}(\R^n)$, $x\in\R^n$ and  $r>0$, we 
define  the oscillation of~$u$ in~$B_r(x)$ as 
$$ \osc_{B_r(x)} u:= \sup_{B_r(x)} u-\inf_{B_r(x)} u.$$
It can be checked by using the definition that a triangular inequality holds:
namely, for all $v,u\in  L^1_{\rm loc}(\R^n)$  and $\lambda, \mu\geq 0$, 
$$ \osc_{B_r(x)} (\lambda u+\mu v)\leq  \lambda \osc_{B_r(x)} (u)+\mu \osc_{B_r(x)} (v).$$
 
Given~$p\ge1$
and an open set~$\Omega\subseteq\R^n$, we introduce the functional
\begin{equation}
\label{E r} {\mathcal{E}}_{r, p}(u, \Omega):=
\int_\Omega \left( \osc_{B_r(x)} u \right)^p\,dx,\end{equation}
which we will denote as the  $p$-oscillation functional. 
This functional is $p$-homogeneous, and it is also convex, due to the triangular
inequality and the
convexity of the map~$[0,+\infty)\ni r\mapsto r^p$. 
Therefore it is   lower semicontinuous in $L^1_{\rm{loc}}$,
see e.g.~\cite{MR3023439}.   
Moreover, we observe that  for convex functionals, weak and strong
lower semicontinuity coincide (see e.g. Theorem~9.1 in~\cite{MR2798533}).

Furthermore,
we notice that if $u$ is not locally bounded, then~${\mathcal{E}}_{r, p}(u, \Omega)=+\infty$. 
\medskip

When~$p=1$, this functional can be interpreted as a discrete version
(at scale~$r$) of the total variation functional (see~\cite{MR1857292}). 
Indeed, it can be proved 
that~$\mathcal{E}_{r, 1}(u, \Omega)$ $\Gamma$-converges as~$r\to 0$
to
$$
TV(u, \Omega):=\left\{\begin{matrix}
\displaystyle \int_{\Omega}|\nabla u(x)|\,dx, &{\mbox{ if }} u\in BV(\Omega),\\
+\infty, &{\mbox{ if }}u\not\in BV(\Omega),
\end{matrix}\right.$$
see~\cite[Proposition 3.5]{MR2655948}.
\medskip

We introduce the  definition of minimizers for the functionals ${\mathcal{E}}_{r,p}$, 
in which competitors are fixed in a neighborhood
of width~$r$ of the boundary. The reason for this choice in the definition of minimizers
is due to the fact that the scale $r$ associated to the functional 
has to be taken into account in order not
to trivialize the notion of Class~A minimizers (see the forthcoming
Proposition~\ref{CLASSIFICATION:FAC}). 

We start with some preliminary definitions.
Given~$r>0$, we let
\begin{equation}
\label{omegar}
\begin{split} & \Omega\oplus B_r:=
\bigcup_{x\in \Omega} B_r(x)=(\partial \Omega\oplus B_r)\cup \Omega
=(\partial \Omega\oplus B_r)\cup(\Omega\ominus B_r),\\
{\mbox{where}}\qquad  &
\Omega\ominus B_r:=
\Omega\setminus \left(\bigcup_{x\in\partial \Omega} B_r(x)\right)=\Omega\setminus\big(
(\partial \Omega)\oplus B_r\big).\end{split}\end{equation}

\begin{definition}[Minimizers and Class~A minimizers] \label{DEFIN2} 
Let $\Omega$ be a  open bounded set in $\R^n$. 
We say that~$u\in L^1(\Omega )$ is
a minimizer in~$\Omega$ for~${\mathcal{E}}_{r,p}$ if
$$ {\mathcal{E}}_{r,p}(u, \Omega)\le {\mathcal{E}}_{r,p}(u+\varphi, \Omega)$$
for any~$\varphi\in L^1(\Omega)$
with~$\varphi=0$ in~$\Omega\setminus (\Omega\ominus B_r)$, where  $\Omega\ominus B_r$ is defined in \eqref{omegar}. 

Also, we say that~$u\in L^1_{\rm loc}(\R^n)$ is
a Class~A minimizer if it is a minimizer in any ball of~$\R^n$.
\end{definition} 

In this paper we are interested in the analysis of the main properties of such minimizers.
We start providing in Section~\ref{compas} a compactness result, which we can state as follows:

\begin{proposition}\label{prominfunzioni}Let $\Omega$ be a
open bounded set in $\R^n$, $p\geq 1$ and $u_k$ be a sequence of minimizers
of~${\mathcal{E}}_{r,p}(\cdot, \Omega)$ such that~$u_k\to u$ in~$L^1(\Omega)$. 
Then, $u$ is a minimizer of ${\mathcal{E}}_{r,p}(\cdot , \Omega\ominus B_r)$.

In particular,
if $u_k$ is a sequence of Class~A minimizers
such that $u_k\to u$ in~$L^1_{\rm loc}(\R^n)$, then $u$ is a Class~A minimizer. 
\end{proposition}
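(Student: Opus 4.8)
The plan is to verify the defining inequality for minimality of $u$ on $\Omega\ominus B_r$ directly, by transferring the minimality of each $u_k$ on $\Omega$ onto the eroded set through a gluing construction, and then passing to the limit with the help of the lower semicontinuity recalled after~\eqref{E r}. First I would fix an admissible perturbation $\varphi$ for $u$ on $\Omega\ominus B_r$, that is $\varphi\in L^1(\Omega\ominus B_r)$ vanishing outside $(\Omega\ominus B_r)\ominus B_r$, and aim at $\mathcal{E}_{r,p}(u,\Omega\ominus B_r)\le\mathcal{E}_{r,p}(u+\varphi,\Omega\ominus B_r)$. The geometric key, read off from the definitions in~\eqref{omegar}, is that $y\in(\Omega\ominus B_r)\ominus B_r$ forces $B_r(y)\subseteq\Omega\ominus B_r$; hence, for every $x\notin\Omega\ominus B_r$, the ball $B_r(x)$ cannot meet the support of a function supported in $(\Omega\ominus B_r)\ominus B_r$. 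Consequently, for any $w$ and any such $\varphi$ the oscillation $\osc_{B_r(x)}(w+\varphi)$ differs from $\osc_{B_r(x)}w$ only for $x\in\Omega\ominus B_r$, the collar contributions cancel, and
\[\mathcal{E}_{r,p}(w+\varphi,\Omega)-\mathcal{E}_{r,p}(w,\Omega)=\mathcal{E}_{r,p}(w+\varphi,\Omega\ominus B_r)-\mathcal{E}_{r,p}(w,\Omega\ominus B_r).\]
Applying this with $w=u_k$ converts the minimality of $u_k$ in $\Omega$ into $\mathcal{E}_{r,p}(u_k,\Omega\ominus B_r)\le\mathcal{E}_{r,p}(u_k+\varphi,\Omega\ominus B_r)$ for every admissible $\varphi$, so each $u_k$ is already a minimizer on $\Omega\ominus B_r$.

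For the lower bound I would note that $u_k\to u$ in $L^1(\Omega)$ and that $\mathcal{E}_{r,p}(\cdot,\Omega\ominus B_r)$ depends only on the values of its argument on $(\Omega\ominus B_r)\oplus B_r\subseteq\overline\Omega$, so the convergence suffices to invoke the $L^1_{\rm loc}$ lower semicontinuity of the functional, yielding $\mathcal{E}_{r,p}(u,\Omega\ominus B_r)\le\liminf_k\mathcal{E}_{r,p}(u_k,\Omega\ominus B_r)$. For the upper bound I would build a recovery competitor: fix a cutoff $\eta$ equal to $1$ on a compact neighborhood $K$ of $\operatorname{supp}\varphi$ and supported in $(\Omega\ominus B_r)\ominus B_r$, and set $w_k:=\eta\,(u+\varphi)+(1-\eta)\,u_k$. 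Then $\operatorname{supp}(w_k-u_k)\subseteq(\Omega\ominus B_r)\ominus B_r$, so $w_k$ is admissible and the collar cancellation applies; minimality of $u_k$ on $\Omega\ominus B_r$ gives $\mathcal{E}_{r,p}(u_k,\Omega\ominus B_r)\le\mathcal{E}_{r,p}(w_k,\Omega\ominus B_r)$. I would split $\Omega\ominus B_r$ according to whether $B_r(x)$ sees only $u+\varphi$, only $u_k$, or the transition zone: the first piece contributes exactly $\mathcal{E}_{r,p}(u+\varphi,\cdot)$, the second piece equals the corresponding term of $\mathcal{E}_{r,p}(u_k,\Omega\ominus B_r)$ and cancels, and combining with the lower bound reduces everything to controlling the energy on the thin transition layer.

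The hard part will be precisely this transition-layer estimate, i.e. the failure of upper semicontinuity of $\mathcal{E}_{r,p}$. On the layer one has $\varphi=0$, so $w_k$ interpolates between $u_k$ and $u$; yet $\osc_{B_r(x)}w_k$ need not converge to $\osc_{B_r(x)}u$, because $L^1$ convergence does not control oscillations — an $L^1$-small spike of $u_k$ keeps the oscillation large and can inflate the competitor energy (the more so for $p>1$). To close this I would use the triangle inequality for $\osc$ recorded before~\eqref{E r} together with an a priori control on minimizers (a comparison/maximum principle ruling out interior spikes, hence equiboundedness of $\{u_k\}$ on compact subsets of $\Omega\ominus B_r$), so that the layer oscillations contribute $o(1)$ uniformly in $k$; letting the layer width tend to $0$ then gives $\mathcal{E}_{r,p}(u,\Omega\ominus B_r)\le\mathcal{E}_{r,p}(u+\varphi,\Omega\ominus B_r)$.

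Finally, the Class~A statement follows by localizing the first part to balls. Given any ball $B_R(x_0)$, I would apply the proposition with $\Omega=B_{R+r}(x_0)$, on which the $u_k$ are minimizers (being Class~A) and on which $u_k\to u$ in $L^1$ by the $L^1_{\rm loc}$ convergence; the conclusion is that $u$ is a minimizer on $B_{R+r}(x_0)\ominus B_r=B_R(x_0)$. Since the ball is arbitrary, $u$ is a Class~A minimizer.
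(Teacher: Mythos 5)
Your preparatory steps are sound: the observation that a perturbation supported in $(\Omega\ominus B_r)\ominus B_r$ is invisible to every ball $B_r(x)$ with $x\notin\Omega\ominus B_r$, hence that each $u_k$ is already a minimizer of $\mathcal{E}_{r,p}(\cdot,\Omega\ominus B_r)$, is correct, and so are the lower semicontinuity bound and the localization $B_{R+r}(x_0)\ominus B_r=B_R(x_0)$ for the Class~A statement. The genuine gap is exactly where you place it, in the transition-layer estimate, and neither of the two devices you propose closes it. First, ``letting the layer width tend to $0$'' cannot work: the problematic set is not $\{0<\eta<1\}$ but the set of $x$ for which $B_r(x)$ meets both $\{\eta=1\}$ and $\{\eta<1\}$, and since the oscillation is computed on balls of the \emph{fixed} radius $r$, this set always contains a two-sided neighborhood of $\partial\{\eta=1\}$ of width comparable to $r$, so its measure is bounded below (roughly by $2r\,\mathcal{H}^{n-1}(\partial\{\eta=1\})$) no matter how sharp the cutoff is. Second, even granting a uniform local bound $|u_k|\le M$, on that region you only control the competitor by $\osc_{B_r(x)}w_k\le 2M+2\|\varphi\|_{L^\infty}$, i.e.\ an $O(1)$ integrand on a set of measure of order $r$; the resulting conclusion is $\mathcal{E}_{r,p}(u,\Omega\ominus B_r)\le\mathcal{E}_{r,p}(u+\varphi,\Omega\ominus B_r)+Cr$, which is not the minimality inequality. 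Moreover, the equiboundedness you invoke is itself unproved and cannot follow from a comparison principle alone: minimality only constrains $u_k$ relative to its own values on the collar $\Omega\setminus(\Omega\ominus B_r)$, which are not assumed uniformly bounded, and $L^1$ convergence gives no control on suprema.

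The paper's proof avoids this obstruction by never attempting to make the mixed region small. It glues sharply, setting $u_k^*:=(u-u_k)\chi_{\Omega\ominus B_r}+u_k$ (no cutoff), and instead of a measure-of-the-bad-set argument it uses a \emph{pointwise} comparison on every ball, namely the first inequality in~\eqref{o1}, $\bigl(\osc_{B_r(x)}u_k^*\bigr)^p\le\max\bigl\{(\osc_{B_r(x)}u_k)^p,(\osc_{B_r(x)}u)^p\bigr\}$, so that the excess of $\mathcal{E}_{r,p}(u_k^*,\Omega)$ over $\mathcal{E}_{r,p}(u_k,\Omega)$ is at most $\omega_k:=\int_\Omega\max\bigl(0,(\osc_{B_r(x)}u)^p-(\osc_{B_r(x)}u_k)^p\bigr)\,dx$; this quantity is then shown to vanish using the pointwise lower semicontinuity $\osc_{B_r(x)}u\le\liminf_k\osc_{B_r(x)}u_k$ (the second inequality in~\eqref{o1}), which is where the $L^1$ convergence actually enters. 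In other words, the missing idea in your scheme is a pointwise oscillation estimate on balls straddling the gluing interface, with the error measured against the lower semicontinuity of $\osc_{B_r(x)}u_k$ rather than against the Lebesgue measure of the interface region; without it, the fixed nonlocality scale $r$ defeats any attempt to shrink the transition zone.
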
 

Then, in Section~\ref{mincos} 
we analyze the relation between the functional~$\mathcal{E}_{r,1}$
and a nonlocal perimeter functional.
The framework in which we work goes as follows:
if $E\subseteq \R^n$ is a measurable set, then we denote 
\begin{equation}\label{per}\Per_r(E, \Omega):=  \frac{1}{2r}{\mathcal{E}}_{r, 1}(\chi_E, \Omega)=\frac{1}{2r}{\mathcal{L}}^n \Big(
\big((\partial E)\oplus B_r\big) 
\cap\Omega\Big)\end{equation}
where $(\partial E)\oplus B_r$ is defined in \eqref{omegar}. 

The definition of~$\Per_r$ is inspired by
the classical Minkowski content (which would be recovered in
the limit, see e.g.~\cite{MR2655948, MR3187918}).
In particular, for 
sets with compact and $(n-1)$-rectifiable boundaries, the functional in~\eqref{per}
may be seen as a nonlocal approximation of the classical perimeter functional,
in the sense that
\begin{equation*} \lim_{r\searrow0}  \Per_r(E)=
{\mathcal{H}}^{n-1} (\partial E).\end{equation*}
Then, we point out the following result:

\begin{theorem}\label{EQUIVALENCE}
If the  function~$u$ is a minimizer 
of~${\mathcal{E}}_{r,1}(\cdot, \Omega)$
then for a.e.~$s\in\R$ the level set~$\{u>s\}$
is a minimizer  for~$\Per_r$ in~$\Omega\ominus B_r$.
Viceversa, if for a.e.~$s\in\R$ the level set~$\{u>s\}$
is a minimizer  for~$\Per_r$ in~$\Omega$ then $u$ is a minimizer 
of~${\mathcal{E}}_{r,1}(\cdot, \Omega)$. 
\end{theorem}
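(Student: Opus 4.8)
The plan is to reduce the whole statement to a coarea-type formula relating $\mathcal{E}_{r,1}$ to $\Per_r$. First I would establish the pointwise identity
\[
\osc_{B_r(x)} u=\int_{\R}\osc_{B_r(x)}\chi_{\{u>s\}}\,ds
=\int_{\R}\chi_{(\partial\{u>s\})\oplus B_r}(x)\,ds,
\]
which follows by writing $\sup_{B_r(x)}u-\inf_{B_r(x)}u=\int_{\R}\big(\chi_{\{s<\sup_{B_r(x)}u\}}-\chi_{\{s<\inf_{B_r(x)}u\}}\big)\,ds$ and observing that $\osc_{B_r(x)}\chi_{\{u>s\}}=1$ exactly when $\inf_{B_r(x)}u\le s<\sup_{B_r(x)}u$, i.e. when $B_r(x)$ meets both $\{u>s\}$ and its complement. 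Integrating over an arbitrary open set $A$ and applying Tonelli, together with the definition \eqref{per}, yields the coarea formula
\[
\mathcal{E}_{r,1}(u,A)=2r\int_{\R}\Per_r(\{u>s\},A)\,ds .
\]
Some care with the choice of representative (equivalently, with essential suprema) is needed here, but this affects only a null set of levels $s$, which is harmless in view of the ``for a.e.\ $s$'' in the statement.

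The converse implication is then immediate. Given a competitor $v=u+\varphi$ with $\varphi=0$ on $\Omega\setminus(\Omega\ominus B_r)$, the super-level sets $\{v>s\}$ coincide with $\{u>s\}$ on $\Omega\setminus(\Omega\ominus B_r)$, hence each $\{v>s\}$ is an admissible competitor for $\{u>s\}$ in the $\Per_r$-minimization in $\Omega$. Assuming that a.e.\ level set of $u$ minimizes $\Per_r$ in $\Omega$, I would apply this level by level and integrate the inequality $\Per_r(\{u>s\},\Omega)\le\Per_r(\{v>s\},\Omega)$ against $ds$; the coarea formula with $A=\Omega$ then gives $\mathcal{E}_{r,1}(u,\Omega)\le\mathcal{E}_{r,1}(v,\Omega)$, so $u$ is a minimizer.

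For the direct implication the appearance of $\Omega\ominus B_r$ must be accounted for. If $\varphi$ is supported in $\Omega\ominus B_{2r}$, then the integrands $\osc_{B_r(x)}u$ and $\osc_{B_r(x)}(u+\varphi)$ differ only for $x\in(\Omega\ominus B_{2r})\oplus B_r=\Omega\ominus B_r$, so minimality of $u$ in $\Omega$ is equivalent to minimality of the localized energy $\mathcal{E}_{r,1}(\cdot,\Omega\ominus B_r)$ under perturbations supported in $\Omega\ominus B_{2r}$; via the coarea formula with $A=\Omega\ominus B_r$ this is precisely the assertion that the level sets minimize $\Per_r$ in $\Omega\ominus B_r$. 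I would then argue by contradiction: if the set $G$ of levels $s$ for which $\{u>s\}$ fails to minimize $\Per_r$ in $\Omega\ominus B_r$ had positive measure, each such $s$ would admit a competitor $F_s$ with $F_s\triangle\{u>s\}\subseteq\Omega\ominus B_{2r}$ and $\Per_r(F_s,\Omega\ominus B_r)<\Per_r(\{u>s\},\Omega\ominus B_r)$. Reassembling the $F_s$ into the super-level sets of a single competitor $v$ for $u$, the coarea formula on $\Omega\ominus B_r$ would give $\mathcal{E}_{r,1}(v,\Omega\ominus B_r)<\mathcal{E}_{r,1}(u,\Omega\ominus B_r)$, contradicting minimality.

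The hard part is precisely this reassembly: the improved competitors $F_s$ need not be nested, whereas a function can be recovered from a family of super-level sets only when that family is monotone decreasing in $s$. I would overcome this by exploiting the submodularity of $\Per_r$, inherited from the inequality $\osc_{B_r(x)}\chi_{A\cup B}+\osc_{B_r(x)}\chi_{A\cap B}\le\osc_{B_r(x)}\chi_{A}+\osc_{B_r(x)}\chi_{B}$ valid on each ball; this yields a comparison principle and hence allows one to select, measurably in $s$, \emph{ordered} minimizing competitors $E_s^*$ that still strictly improve on $\{u>s\}$ for $s\in G$. The monotone family $\{E_s^*\}$ then defines an admissible $v$, and strict improvement on the positive-measure set $G$ closes the contradiction. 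The remaining points — measurable selection of the $E_s^*$ and the careful bookkeeping of the three neighborhoods $\Omega$, $\Omega\ominus B_r$ and $\Omega\ominus B_{2r}$ so that every perturbation stays admissible — are routine once submodularity is in place.
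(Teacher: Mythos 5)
Your converse implication is essentially the paper's: both rest on the coarea identity of Lemma~\ref{COAREA:L}, i.e.\ $\mathcal{E}_{r,1}(u,A)=2r\int_\R\Per_r(\{u>s\},A)\,ds$, and your layer-cake derivation of it is correct. The localization bookkeeping in the direct implication (perturbations supported in $\Omega\ominus B_{2r}$ affect the energy density only on $(\Omega\ominus B_{2r})\oplus B_r\subseteq\Omega\ominus B_r$, which is why a collar of width $r$ is lost in the conclusion) is also sound; note only that $(\Omega\ominus B_{2r})\oplus B_r=\Omega\ominus B_r$ is in general just an inclusion, which is all you need.

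The genuine gap is the step you yourself flag as ``the hard part'' and then defer: the construction of the ordered, measurable family $\{E_s^*\}$ of strictly improving competitors. Submodularity of $\Per_r$ does hold and yields $\Per_r(E\cup F,\cdot)+\Per_r(E\cap F,\cdot)\le\Per_r(E,\cdot)+\Per_r(F,\cdot)$, from which one can order \emph{minimal minimizers} with nested exterior data; but to speak of minimal minimizers you need (i) existence of minimizers of $\Per_r$ in $\Omega\ominus B_r$ with prescribed exterior data, which for this nonlocal perimeter is not free (a uniform bound on $\Per_r(E_k)$ does not by itself give $L^1$-compactness of $\chi_{E_k}$, so the direct method needs an extra argument; this is part of the content of \cite{cdnv}, not something that can be labelled routine), (ii) closure of the class of minimizers under countable intersections so that a smallest one exists, and (iii) joint measurability of $(s,x)\mapsto\chi_{E_s^*}(x)$ so that the reassembled $v$ is well defined with $\{v>s\}=E_s^*$ for a.e.\ $s$. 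None of these is carried out, and together they constitute most of the work in this direction. The paper sidesteps the reassembly problem entirely: it uses the exact decomposition $\osc_{B_r(x)}u=\osc_{B_r(x)}\min\{u,c\}+\osc_{B_r(x)}\max\{u,c\}$ together with the triangle inequality to show that horizontal truncations of a minimizer are again minimizers, hence the clipped rescalings $u_{\lambda,s}$ of \eqref{LULAMB} are minimizers, and then sends $\lambda\to+\infty$, so that $u_{\lambda,s}\to\chi_{\{u>s\}}$ in $L^1_{\rm loc}$ and the compactness of minimizers (Proposition~\ref{prominfunzioni}) applies --- which is also exactly where the restriction to $\Omega\ominus B_r$ enters. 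To complete your route you would have to import the existence and ordering results for $\Per_r$-minimizers and supply the measurable selection; otherwise the truncation argument is the more economical fix.
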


We provide additional results on~$\Per_r$
in~\cite{cdnv}. See~\cite{MR2728706, MR2655948, MR3187918, MR3023439, MR3401008}
for a number of related problems and results.
\medskip

One of the main results of this paper is about
the existence of solutions to the Dirichlet problem:

\begin{theorem}\label{EXISTENCE}
Let $\Omega\subseteq\R^n$ be a bounded open set, and $u_o\in L^\infty(\Omega\oplus B_r)$.
Then, there exists~$u\in L^\infty(\Omega\oplus B_r)$ with~$u=u_o$
in~$(\Omega\oplus B_r)\setminus\Omega$  and $\|u\|_{L^\infty(\Omega\oplus B_r)}\leq \|u_o\|_{L^\infty(\Omega\oplus B_r)}$ such that~$
{\mathcal{E}}_{r,p}(u,\Omega)\le {\mathcal{E}}_{r,p}(v,\Omega )$
for any~$v\in L^1_{\rm loc}(\Omega\oplus B_r)$ with~$v=u_o$
in~$(\Omega\oplus B_r)\setminus\Omega$.

Finally, if~$n=1$ and~$u_o\in L^\infty(\Omega\oplus B_r)$ is monotone, there exists a minimizer~$u$ that is also monotone.
\end{theorem}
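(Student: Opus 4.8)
The plan is to treat the two assertions separately: existence by the direct method (with a truncation step to gain compactness), and the monotone refinement through a lattice (submodularity) inequality for the functional.

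For the existence statement I would first observe that the infimum is finite, since $v=u_o$ taken on all of $\Omega\oplus B_r$ is an admissible competitor with $\mathcal{E}_{r,p}(u_o,\Omega)<+\infty$ because $u_o\in L^\infty$. Let $v_k$ be a minimizing sequence with $v_k=u_o$ on $(\Omega\oplus B_r)\setminus\Omega$. The key preliminary step is truncation: setting $M:=\|u_o\|_{L^\infty(\Omega\oplus B_r)}$ and $T_M(t):=\max(-M,\min(M,t))$, I replace $v_k$ by $T_M\circ v_k$. Since $T_M$ is $1$-Lipschitz and nondecreasing, one has the pointwise bound $\osc_{B_r(x)}(T_M\circ v_k)\le\osc_{B_r(x)} v_k$, so the energy does not increase; moreover $|u_o|\le M$ guarantees $T_M\circ v_k=u_o$ on the collar, so admissibility is preserved. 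This produces a minimizing sequence uniformly bounded by $M$ in $L^\infty$. By the Banach--Alaoglu theorem I extract a subsequence converging weakly-$*$ in $L^\infty(\Omega\oplus B_r)$, hence weakly in $L^1$ since the domain is bounded, to some $u$ with $\|u\|_{L^\infty}\le M$; the constraint being affine and weakly-$*$ closed, the limit still satisfies $u=u_o$ on the collar. Finally, convexity of $\mathcal{E}_{r,p}(\cdot,\Omega)$ forces weak and strong $L^1$ lower semicontinuity to coincide, as recalled in the introduction, whence $\mathcal{E}_{r,p}(u,\Omega)\le\liminf_k\mathcal{E}_{r,p}(T_M\circ v_k,\Omega)$, which is the infimum, so $u$ is the desired minimizer.

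The monotone refinement rests on a submodularity property that I would establish first. Writing $u\vee v$ and $u\wedge v$ for the pointwise maximum and minimum, a direct inspection of suprema and infima on a fixed ball shows that, with $s:=\osc_{B_r(x)}u$ and $t:=\osc_{B_r(x)}v$, the two left-hand oscillations are bounded by a pair $(P,Q)$ satisfying $P+Q=s+t$ and $\max(P,Q)\le\max(s,t)$, i.e.\ $(P,Q)$ is majorized by $(s,t)$. Since $(s,t)\mapsto s^p+t^p$ is symmetric and convex, hence Schur-convex, this upgrades to the pointwise inequality $(\osc_{B_r(x)}(u\vee v))^p+(\osc_{B_r(x)}(u\wedge v))^p\le(\osc_{B_r(x)}u)^p+(\osc_{B_r(x)}v)^p$. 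Integrating over $\Omega$ yields the submodularity
\[
\mathcal{E}_{r,p}(u\vee v,\Omega)+\mathcal{E}_{r,p}(u\wedge v,\Omega)\le\mathcal{E}_{r,p}(u,\Omega)+\mathcal{E}_{r,p}(v,\Omega).
\]
A standard consequence is a comparison principle: if $u_1,u_2$ are minimizers whose boundary data satisfy $u_1\le u_2$ on the collar, then $u_1\wedge u_2$ and $u_1\vee u_2$ are again minimizers for the respective data, so one may always choose ordered minimizers.

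For the monotonicity itself, say with $u_o$ nondecreasing, I would combine a direct method in the monotone class with this comparison principle. For a nondecreasing competitor $v$ the oscillation on $B_r(x)=(x-r,x+r)$ is simply $v(x+r)-v(x-r)$, so the restriction of $\mathcal{E}_{r,p}$ to monotone functions is the explicit convex functional $\int_\Omega(v(x+r)-v(x-r))^p\,dx$; truncation preserves monotonicity, and by Helly's selection theorem the class of nondecreasing functions valued in $[-M,M]$ with the prescribed collar data is compact in $L^1$. Lower semicontinuity then furnishes a minimizer $\bar u$ within the monotone class. The remaining, and main, difficulty is to show that $\bar u$ is a global minimizer, i.e.\ that restricting to monotone competitors does not raise the infimum. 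Here I would monotonize an arbitrary competitor: from $\mathcal{E}_{r,p}(u,\Omega)\ge\int_\Omega|u(x+r)-u(x-r)|^p\,dx$ (the oscillation dominates any single increment) I would pass to the monotone rearrangement $u^*$ and invoke the classical contraction estimate $\int|u^*(\cdot+h)-u^*|^p\le\int|u(\cdot+h)-u|^p$ with $h=2r$ to obtain $\mathcal{E}_{r,p}(u^*,\Omega)\le\mathcal{E}_{r,p}(u,\Omega)$. The delicate point I expect to be the genuine obstacle is making the rearrangement compatible with the fixed collar data: the naive monotone rearrangement does not preserve boundary values, and reconciling the two requires exploiting that $u_o$ is itself monotone---either by rearranging only the interior increments while keeping the collar pinned (using the comparison principle to control the interior range by the endpoint collar values), or by a sliding argument comparing $u$ with its translates, which in a bounded interval introduces boundary effects near the endpoints. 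I would organize the final step around the comparison principle above, as it is the cleanest device for keeping the prescribed monotone data fixed throughout the monotonization.
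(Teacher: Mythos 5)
Your existence argument is correct and coincides with the paper's: truncation at level $\pm\|u_o\|_{L^\infty}$ to confine the minimizing sequence, weak-$*$ compactness, and lower semicontinuity from convexity. The submodularity inequality you derive via majorization is also correct (and is in fact a more general form of the identity $\osc u=\osc\min\{u,c\}+\osc\max\{u,c\}$ that the paper uses elsewhere, for $p=1$ and constant cuts).

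The gap is in the monotone refinement, and it sits exactly where you flag it: you never actually prove that the minimizer obtained in the monotone class is a global minimizer, or equivalently that an arbitrary competitor can be monotonized without raising the energy \emph{and} without disturbing the collar data. The rearrangement route you sketch does not close as stated: the monotone rearrangement of $u$ on $(a-r,b+r)$ will in general not equal $u_o$ on $(a-r,a]\cup[b,b+r)$, and the contraction estimate $\int|u^*(\cdot+h)-u^*|^p\le\int|u(\cdot+h)-u|^p$ is a whole-line (or periodic) statement whose boundary terms on a bounded interval are precisely the problem. The comparison principle you propose as a fallback compares two minimizers for \emph{ordered data}; it gives ordering of solutions, not monotonicity in $x$ of a single solution, and turning it into the latter (e.g.\ by sliding $u$ against its own translates) again runs into the shifted collar. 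The paper's resolution is different and avoids all of this: start from any minimizer $u$ produced by the first part, replace it inside $(a,b)$ by $\min\{u,u_o(b)\}$ (this decreases every oscillation and keeps the collar data, so it is still a minimizer), and then pass to the nondecreasing envelope $\eta_u(x):=\sup_{\tau\in(a-r,x]}u(\tau)$. One checks directly that $\eta_u=u_o$ on the collar --- this is where the monotonicity of $u_o$ and the prior truncation at $u_o(b)$ are used --- and that for a.e.\ $x$
\begin{equation*}
\osc_{(x-r,x+r)}\eta_u=\sup_{(x-r,x+r)}\eta_u-\inf_{(x-r,x+r)}\eta_u\le\osc_{(x-r,x+r)}u,
\end{equation*}
because $\sup_{(x-r,x+r)}\eta_u=\sup_{(x-r,x+r)}u$ (the running supremum over $(a-r,x+r]$ is attained, up to $\eps$, inside $(x-r,x+r)$ whenever $\eta_u(x+r)>\eta_u(x-r)$) while $\eta_u\ge u$ gives $\inf\eta_u\ge\inf u$. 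This pointwise monotonization of a \emph{minimizer} (rather than of an arbitrary competitor) is the missing idea; without it, or a completed substitute, your proof of the second assertion is incomplete.
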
 

This result is proved in Section~\ref{dirs}. 
Finally, in Section~\ref{rigs}
we provide a description for
Class~A minimizers in dimension~1.
More precisely, we collect the results that we obtain in the following statement:

\begin{theorem} \label{THM3}
Let $n=1$. Then, the following holds: 
\begin{enumerate}
\item If ~$u $ is
a Class~A minimizer for 
the functional~$\mathcal{E}_{r,p}$ for some $p\geq 1$,  then $u$ is monotone.
\item Every monotone function is a Class~A minimizer for $\mathcal{E}_{r,1}$.
\item Every  monotone function $u$  such that  $u(x)=C x+ \phi(x)$ for 
some $C\in\R$ and $\phi\in L^1_{\rm loc}(\R)$ which is $2r$-periodic
is a Class~A minimizer for $\mathcal{E}_{r,p}$ for any $p> 1$.
\item If ~$u$ is a Class~A minimizer for $\mathcal{E}_{r,p}$
for some~$p>1$ and $u$ is strictly monotone, then there exist~$C\neq 0$
and~$\phi\in L^1_{\rm loc}(\R)$ which is $2r$-periodic
such that~$u(x)= Cx+\phi(x)$.  \end{enumerate}
\end{theorem}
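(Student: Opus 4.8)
The plan is to reduce every item to the elementary observation that for a monotone function the oscillation is the increment across the window: in dimension one $B_r(x)=(x-r,x+r)$, and if $u$ is non-decreasing the supremum and infimum over $B_r(x)$ are attained at the endpoints, so
\[
\osc_{B_r(x)}u=u(x+r)-u(x-r),
\]
with the opposite sign for non-increasing $u$. This turns $\mathcal{E}_{r,p}$ into the finite-difference functional $\int_\Omega\big(u(x+r)-u(x-r)\big)^p\,dx$ on monotone competitors, which is convex in $u$.

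For item (1) I would argue by contradiction. Since $\osc_{B_r(x)}(-u)=\osc_{B_r(x)}u$, both the functional and the class of minimizers are invariant under $u\mapsto-u$; a non-monotone function is neither non-decreasing nor non-increasing, hence displays both an ascent and a descent, producing an interior local maximum or minimum, and after possibly replacing $u$ by $-u$ we may assume there is an interior local maximum. Using that finite energy forces $u$ locally bounded, I would pick a level $\lambda$ and a \emph{bounded} connected component $(a,c)$ of the superlevel set $\{u>\lambda\}$ with $u(a)=u(c)=\lambda$ and $u>\lambda$ on $(a,c)$, and set $v:=\min(u,\lambda)$ on $(a,c)$ and $v:=u$ elsewhere. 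Then $v=u$ off the compact set $[a,c]$, so $v$ is admissible in every large ball, and because the truncation is performed on a single excursion above the crossing level one checks $\osc_{B_r(x)}v\le\osc_{B_r(x)}u$ for all $x$, strictly on a set of positive measure near $(a,c)$, whence $\mathcal{E}_{r,p}(v,\Omega)<\mathcal{E}_{r,p}(u,\Omega)$, a contradiction. The delicate point is precisely the choice of the excursion and level: a careless global or mismatched truncation can \emph{raise} the oscillation on balls lying entirely above $\lambda$, so one must truncate exactly on one component of $\{u>\lambda\}$ with matching boundary values $u(a)=u(c)=\lambda$.

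For item (2), let $u$ be monotone, say non-decreasing, so that each superlevel set $\{u>s\}$ is a half-line; by the converse implication in Theorem~\ref{EQUIVALENCE} it suffices to show that half-lines minimize $\Per_r(\cdot,\Omega)$ in every ball. If $E=(t,\infty)$ with $t$ interior to $\Omega$ then $\Per_r(E,\Omega)=\frac{1}{2r}\mathcal{L}^1\big((t-r,t+r)\cap\Omega\big)=1$, while any competitor $F$ with $F=E$ off $\Omega\ominus B_r$ is ``out'' near the left end of $\Omega$ and ``in'' near the right end, so $\partial F$ contains a point $x_0\in\Omega\ominus B_r$, and then $(x_0-r,x_0+r)\subseteq\big((\partial F)\oplus B_r\big)\cap\Omega$ gives $\Per_r(F,\Omega)\ge1$. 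For item (3), a monotone $u=Cx+\phi$ with $\phi$ being $2r$-periodic satisfies $\phi(x+r)=\phi(x-r)$, hence $\osc_{B_r(x)}u=2r|C|$ is constant and $\mathcal{E}_{r,p}(u,\Omega)=(2r|C|)^p|\Omega|$. For any competitor $v$, Jensen's inequality applied to $x\mapsto\osc_{B_r(x)}v\ge0$ yields $\mathcal{E}_{r,p}(v,\Omega)\ge|\Omega|^{1-p}\,\mathcal{E}_{r,1}(v,\Omega)^p\ge|\Omega|^{1-p}\,\mathcal{E}_{r,1}(u,\Omega)^p=(2r|C|)^p|\Omega|=\mathcal{E}_{r,p}(u,\Omega)$, the middle inequality being exactly item (2); thus $u$ is a Class~A minimizer.

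Item (4) is the heart of the theorem. Since $p>1$, item (1) gives that $u$ is monotone, say strictly increasing, so $g(x):=\osc_{B_r(x)}u=u(x+r)-u(x-r)>0$ and $\mathcal{E}_{r,p}(u,\Omega)=\int_\Omega g^p$. I would encode $u$ by the positive measure $\mu:=Du$, noting $g(x)=\mu\big((x-r,x+r)\big)$ and that strict monotonicity means $\mu$ charges every interval. Computing the first variation by transporting an infinitesimal mass from an interior point $t_1$ to an interior point $t_2$ (admissible because $\mu$ has full support and the boundary data is preserved) gives the optimality condition $\int_{t_2-r}^{t_2+r}g^{p-1}=\int_{t_1-r}^{t_1+r}g^{p-1}$ for all interior $t_1,t_2$; differentiating in the endpoint shows $g(t+r)^{p-1}=g(t-r)^{p-1}$, so $g$ is $2r$-periodic. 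To upgrade periodicity to constancy I would use monotonicity at large scale: telescoping yields $u(a+2Nr)-u(a)=N\,g(a+r)$, so comparing the basepoints $a$ and $a+s$ with $0<s<2r$ and letting $N\to\infty$ forces $g(a+s+r)\ge g(a+r)$, and as $s$ sweeps a full period this gives $g\equiv\min g=:2rC$ with $C\ne0$. Finally $u(x+2r)-u(x)=g(x+r)=2rC$ shows that $\phi(x):=u(x)-Cx$ is $2r$-periodic. The main obstacle is justifying the first variation for the merely monotone, non-smooth minimizer, and this is exactly where strict monotonicity enters: it guarantees that $Du$ has full support, so that mass-transport perturbations are available at \emph{every} interior point and the optimality condition holds as a two-sided equality everywhere, delivering the $2r$-periodicity of $g$.
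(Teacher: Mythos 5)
The critical weakness is item (1). Your construction --- truncating $u$ at a level $\lambda$ on a single bounded connected component $(a,c)$ of $\{u>\lambda\}$ with $u(a)=u(c)=\lambda$ --- already presupposes that $u$ is continuous: for a function that is merely in $L^1_{\rm loc}$ and locally bounded, connected components of a superlevel set and matching boundary values via the intermediate value theorem are not available, which is exactly why the paper works with the envelopes $\underline{u},\overline{u}$ and Lebesgue points. More seriously, the assertion that $\osc_{B_r(x)}v\le\osc_{B_r(x)}u$ everywhere \emph{with strict inequality on a set of positive measure} is the crux, and you do not prove it. The non-strict inequality is fine (any ball meeting both $(a,c)$ and its complement contains $a$ or $c$, where $u=\lambda$, so the infima of $u$ and $v$ coincide and the supremum can only drop). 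But strictness requires some ball to see the truncated excursion as its \emph{dominant} feature: if $u$ attains values $\ge\max_{[a,c]}u$ at points outside $(a,c)$ but within distance $2r$ of the maximum point of the excursion, every ball touching the truncated peak may also contain a surviving peak of the same height, and then no oscillation decreases for that choice of $\lambda$ and component. This is precisely what the paper's proof of Proposition~\ref{minmon} is engineered to exclude: it first shows the flattened interval has length at most $2r$, then takes the interval \emph{maximal}, derives the dichotomy \eqref{uno}--\eqref{uno1} guaranteeing that $u$ stays strictly on one side of the cut level outside the interval, and only then exhibits a definite interval of centers $x\in(a+r,\alpha+r)$ where the oscillation strictly decreases. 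Without an analogous adaptive choice of level and excursion, your proof of (1) is incomplete.

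Items (2)--(4) are essentially sound. For (2) you route through Theorem~\ref{EQUIVALENCE} and the $\Per_r$-minimality of half-lines rather than the paper's direct rearrangement of $\int_a^b|v(x+r)-v(x-r)|\,dx$ into boundary integrals; this works (it is how the paper itself proves Proposition~\ref{CLASSIFICATION:FAC}, via~\cite{cdnv}), but note that $\Per_r((t,\infty),\Omega)$ equals $1$ only when $(t-r,t+r)\subseteq\Omega$ --- you only need ``$\le 1$'', so the slip is harmless, though the competitor analysis should also dispose of the case $F=E$ a.e.\ and of $t$ near or outside $\Omega$. Your (3) is the paper's Jensen argument verbatim. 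For (4), your mass-transport first variation is a legitimate, and in one respect more robust, substitute for the paper's perturbation $u+\delta\psi$ with smooth $\psi$ (which tacitly needs $u+\delta\psi$ monotone for small $\delta$ of both signs): your competitors are monotone by construction, provided you transport a small \emph{positive amount} of mass taken from a short interval around $t_1$ (not from the point itself, since $Du$ need not have atoms) and use both directions of transport to upgrade the one-sided variational inequality to the equality $\int_{t_1-r}^{t_1+r}g^{p-1}=\int_{t_2-r}^{t_2+r}g^{p-1}$. Your telescoping argument for the constancy of $g$ is cleaner than the paper's contradiction with the ratios $\frac{k+1}{k}$.
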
 

A related  problem which is left open is about the validity of rigidity results
for Class~A
minimizers in dimension greater than~1.
In particular, it could be interesting to 
study an analogous of the Bernstein problem for the~$1$-oscillation functional,
in analogy with the classical total variation functional. 
For $p>1$, rigidity type results should be in analogy with classical
Liouville type theorems for $p$-Dirichlet functionals. 

\subsubsection*{Notation}  
In the $\sup$ and $\inf$ notation, we mean
the ``essential supremum and infimum'' of the function (i.e.,
sets of null measure are neglected).  Moreover we shall identify a set~$E\subseteq \R^n$ with its points of density one and 
$\partial E$ with the topological boundary of the set of points of density one. 
Finally for any  $u:I\subset \R \to \R$ monotone function,
we will always identify $u$ with its   right continuous representative. 

\section{Compactness of minimizers}\label{compas}

Here we prove the
compactness result on the minimizers of the oscillation functional stated in Proposition~\ref{prominfunzioni}.

\begin{proof}[Proof of Proposition~\ref{prominfunzioni}]
Let $\Omega':=\Omega\ominus B_r$ and~$\varphi$ such 
that ${\rm{supp}} \varphi = \Omega' \ominus B_r$,
and we claim that 
\begin{equation}\label{iegergregerj}
\mathcal{E}_{r,p}(u+\varphi, \Omega')\geq
\mathcal{E}_{r,p}(u, \Omega').\end{equation}
For this, we define $u_k^*:=(u-u_k)\chi_{\Omega'} +u_k$. 
Then we observe that for a.e. $x\in \Omega$ and for all $p\geq 1$
\begin{equation}\begin{split} \label{o1}
& \Big(\osc_{B_r(x)}u_k^*\Big)^p\leq
\max\left\{\Big(\osc_{B_r(x)}u_k\Big)^p, \Big(
\osc_{B_r(x)}u\Big)^p\right\}\\ \text{ and } \quad &
\Big(\osc_{B_r(x)}u \Big)^p\leq \liminf_k \Big(\osc_{B_r(x)}u_k\Big)^p.
\end{split}\end{equation}
Using \eqref{o1}, we compute 
\begin{eqnarray*}
\int_{\Omega} (\osc_{B_r(x)}u_k^*)^p \,dx
&\leq &\int_{\Omega}( \osc_{B_r(x)}u_k)^p\, dx
+\int_{\Omega} \max(0, ( \osc_{B_r(x)}u)^p-(\osc_{B_r(x)}u_k)^p)\, dx\\&=& 
\int_{\Omega} (\osc_{B_r(x)}u_k)^p\, dx+\omega_k,
\end{eqnarray*} where 
\begin{equation}\label{iegergregerj:2}
{\mbox{$\omega_k\to 0$ as~$k\to+\infty$.}}\end{equation} 
Therefore, we get, by construction and using the minimality of $u_k$, 
\begin{align*}
\mathcal{E}_{r,p}(u+\varphi, \Omega')- \mathcal{E}_{r,p}(u, \Omega') =\mathcal{E}_{r,p}(u_k^*+\varphi, \Omega')-\mathcal{E}_{r,p}(u_k^*, \Omega')
\\\geq \mathcal{E}_{r,p}(u_k^*+\varphi, \Omega')-
\mathcal{E}_{r,p}(u_k, \Omega')-\omega_k\geq -\omega_k. \end{align*}
As a consequence, sending~$k\to+\infty$ and recalling~\eqref{iegergregerj:2},
we obtain~\eqref{iegergregerj}.
\end{proof} 

\section{Relation with the Minkowski perimeter}\label{mincos}

In this section we discuss the relation between
the $p$-Dirichlet functional in~\eqref{E r} and the
Minkowski perimeter
in~\eqref{per}. Namely, we have the  following generalized coarea formula
which relates the functional~$\mathcal{E}_{r,1}$ with the functional~$\Per_r$
(see formulas~(4.3) and~(5.7)
in~\cite{MR3401008} for similar formulas in very related contexts).

\begin{lemma}\label{COAREA:L}
It holds that
\begin{equation}\label{COAREA}
\int_\Omega \osc_{B_r(x)} u\,dx=2r\,\int_{-\infty}^{+\infty}
\Per_{r}(\{u>s\}, \Omega) \,ds.\end{equation}
\end{lemma}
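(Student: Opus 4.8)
The plan is to prove the coarea formula \eqref{COAREA} by reducing it to a pointwise identity inside the integral, so that the statement follows from Fubini's theorem. The key observation is that for a fixed $x$, the oscillation $\osc_{B_r(x)} u$ can be recovered from the superlevel sets of $u$ restricted to the ball $B_r(x)$. Concretely, I would first establish the pointwise layer-cake identity
\begin{equation*}
\osc_{B_r(x)} u = \int_{-\infty}^{+\infty} \osc_{B_r(x)} \chi_{\{u>s\}}\,ds,
\end{equation*}
and then integrate this over $x\in\Omega$ and swap the order of integration.

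The main technical content is verifying the pointwise identity. For a fixed $x$, set $M:=\sup_{B_r(x)} u$ and $m:=\inf_{B_r(x)} u$, so that $\osc_{B_r(x)} u = M-m$ (these may be infinite, but in that case ${\mathcal{E}}_{r,1}(u,\Omega)=+\infty$ as noted in the excerpt, and one checks both sides diverge consistently, so I restrict to the locally bounded case). The function $\osc_{B_r(x)} \chi_{\{u>s\}}$ takes only the values $0$ and $1$: it equals $1$ precisely when $B_r(x)$ meets both $\{u>s\}$ and its complement $\{u\le s\}$ in positive measure, i.e. when $\sup_{B_r(x)}\chi_{\{u>s\}}=1$ and $\inf_{B_r(x)}\chi_{\{u>s\}}=0$. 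Using the essential-supremum/infimum convention of the paper, this happens exactly for $s\in[m,M)$: if $s\ge M$ then $u\le s$ a.e.\ on $B_r(x)$ so the characteristic function is essentially $0$ there and the oscillation is $0$; if $s<m$ then $u>s$ a.e.\ on $B_r(x)$ and again the oscillation vanishes; for $s\in[m,M)$ both level sets have positive measure in $B_r(x)$, giving oscillation $1$. Hence the right-hand side integral equals the length of $[m,M)$, namely $M-m=\osc_{B_r(x)} u$, establishing the pointwise identity.

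With the pointwise identity in hand, I would integrate in $x$ over $\Omega$ and apply Tonelli's theorem, which is justified since the integrand $\osc_{B_r(x)} \chi_{\{u>s\}}\ge 0$ is nonnegative and jointly measurable in $(x,s)$. This yields
\begin{equation*}
\int_\Omega \osc_{B_r(x)} u\,dx
= \int_\Omega \int_{-\infty}^{+\infty} \osc_{B_r(x)} \chi_{\{u>s\}}\,ds\,dx
= \int_{-\infty}^{+\infty}\left(\int_\Omega \osc_{B_r(x)} \chi_{\{u>s\}}\,dx\right)ds.
\end{equation*}
Finally, recognizing the inner integral as ${\mathcal{E}}_{r,1}(\chi_{\{u>s\}},\Omega)$ and invoking the definition \eqref{per}, namely ${\mathcal{E}}_{r,1}(\chi_E,\Omega)=2r\,\Per_r(E,\Omega)$, gives exactly the desired formula \eqref{COAREA}.

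I expect the only delicate point to be the careful handling of the essential supremum and infimum when computing $\osc_{B_r(x)}\chi_{\{u>s\}}$, in particular making sure the endpoint values $s=m$ and $s=M$ are treated correctly (so that the measure of the relevant $s$-set is precisely $M-m$, with the behaviour on the null set of endpoints being irrelevant to the integral). The joint measurability needed to apply Tonelli is routine but should be mentioned, as should the reduction to the locally bounded case to avoid spurious $+\infty$ issues.
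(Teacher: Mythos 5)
Your argument is correct and complete: the pointwise layer-cake identity $\osc_{B_r(x)} u=\int_{-\infty}^{+\infty}\osc_{B_r(x)}\chi_{\{u>s\}}\,ds$ holds (with the only ambiguity at the null set of endpoints $s=m,M$, which you correctly dismiss), Tonelli applies since the integrand is nonnegative, and the conclusion follows from the definition of $\Per_r$ in \eqref{per}. The paper itself states Lemma~\ref{COAREA:L} without proof, only pointing to analogous formulas in the literature, so there is no argument in the paper to compare against; yours is the natural one, and indeed the restriction to locally bounded $u$ is not even needed, since the pointwise identity holds in $[0,+\infty]$ and Tonelli requires no integrability.
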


The coarea formula and the previous Proposition \ref{prominfunzioni} provide a link between  
local minimizers   of~${\mathcal{E}}_{r,1}(\cdot, \Omega)$
and the  local minimization of~$\Per_r$ in~$\Omega$ of the level sets,
according to Theorem~\ref{EQUIVALENCE} that we now prove.

\begin{proof}[Proof of Theorem~\ref{EQUIVALENCE}]
In all the proof, we will take~$v$ to be equal to~$u$ 
outside~$\Omega\ominus B_r$, i.e.~$v=u+\phi$, 
with~$\phi$ vanishing
outside~$\Omega\ominus B_r$.

First, we assume that
for a.e.~$s\in\R$ the level set~$\{u>s\}$
is a minimizer  for~$\Per_r$ in~$\Omega$.
Then~$\Per_r\big(\{u>s\},\Omega)\le\Per_r\big(\{v>s\},\Omega)$
for a.e.~$s\in\R$, which combined with the coarea formula in~\eqref{COAREA}
gives that
$$ \int_\Omega \osc_{B_r(x)} u\,dx\le
\int_\Omega \osc_{B_r(x)} v\,dx.$$
This shows that~$u$ is a local minimizer of~${\mathcal{E}}_{r,1}(\cdot, \Omega)$, as desired.

Viceversa, assume now that~$u$ is a local minimizer 
of~${\mathcal{E}}_{r,1}(\cdot, \Omega)$.
Given~$t\in\R$ and~$\lambda>0$, we define
\begin{equation}\label{LULAMB} 
u_{\lambda,s}(x):=\frac12+\max\left\{\min\left\{
\lambda\big(u(x)-s\big),\frac12
\right\}
,\,-\frac12
\right\}.\end{equation}
We claim that
\begin{equation}\label{LU}
{\mbox{$u_{\lambda,s}$ is a   minimizer of~${\mathcal{E}}_{r,1}(\cdot, \Omega)$. }}
\end{equation}
To prove this,
we need to combine
different ideas appearing in the literature in different contexts.
On the one hand, arguing as in
Proposition~3.2 of~\cite{MR2655948},
one sees that the procedure of taking~$\min$ and~$\max$ (as in~\eqref{LULAMB})
makes the energy decrease.
On the other hand, this procedure in general changes the boundary
data hence the minimization in the appropriate class
may get lost (to picture this phenomenon, one can think at the
one dimensional case in which~$u_1(x)=x$ and~$u_2(x)=-x$
may have minimal properties, but the energy of~$\max\{u_1(x),u_2(x)\}=|x|$
may be lowered by horizontal cuts).

Hence, to overcome this difficulty, we will adopt a strategy developed
in Lemma~3.5 of~\cite{MR1930621} to consider specifically the horizontal cuts.
To this end,
we first notice that, for any constant~$c\in\R$,
\begin{equation} \label{OC COS}
\osc_{B_r(x)}u = \osc_{B_r(x)}\min\{u,c\} +
\osc_{B_r(x)}\max\{u,c\}.
\end{equation}
This fact is  a direct consequence of the definition. 
%To check this, we 
%distinguish three cases, either~$u\ge c$ in~$B_r(x)$,
%or~$u\le c$  in~$B_r(x)$, or~$\{u<c\}\cap B_r(x)\ne\varnothing$
%and~$\{u>c\}\cap B_r(x)\ne\varnothing$.
%

Now, for any~$\phi$ supported in $\Omega\ominus B_r$, 
using~\eqref{OC COS} and the minimality of~$u$, we find that
\begin{equation}\label{MI E MA0}
\begin{split}
& \int_\Omega \osc_{B_r(x)}\min\{u,c\} \,dx+\int_\Omega
\osc_{B_r(x)}\max\{u,c\}\,dx\\&\qquad=
\int_\Omega
\osc_{B_r(x)}u \,dx\le\int_\Omega
\osc_{B_r(x)}(u+\phi) \,dx\\&\qquad=\int_\Omega
\osc_{B_r(x)}(u+c+\phi) \,dx
=\int_\Omega\osc_{B_r(x)}\big(\min\{u,c\}+\max\{u,c\}+\phi\big) \,dx
.\end{split}\end{equation}
We also observe that by the triangular inequality
\begin{equation}\label{MI E MA1}
 \osc_{B_r(x)}\big(\min\{u,c\}+\max\{u,c\}+\phi\big) 
%\\&\qquad=\sup_{B_r(x)}\big(\min\{u,c\}+\max\{u,c\}+\phi\big)
%-\inf_{B_r(x)}\big(\min\{u,c\}+\max\{u,c\}+\phi\big)\\
%&\qquad\le
%\sup_{B_r(x)}\min\{u,c\}+\sup_{B_r(x)}\big(\max\{u,c\}+\phi\big)
%-\inf_{B_r(x)}\min\{u,c\}-\inf_{B_r(x)}\big(\max\{u,c\}+\phi\big)\\
\leq \osc_{B_r(x)}\min\{u,c\}+
\osc_{B_r(x)}\big(\max\{u,c\}+\phi\big).\end{equation}
In a similar way, we see that
\begin{equation}\label{MI E MA2}
\osc_{B_r(x)}\big(\min\{u,c\}+\max\{u,c\}+\phi\big)\le
\osc_{B_r(x)}\max\{u,c\}+
\osc_{B_r(x)}\big(\min\{u,c\}+\phi\big)
.\end{equation}
Inserting~\eqref{MI E MA1} into~\eqref{MI E MA0}
and simplifying one term, we obtain that
\begin{equation}\label{MI E MA3}
\int_\Omega
\osc_{B_r(x)}\max\{u,c\}\,dx\le
\int_\Omega
\osc_{B_r(x)}\big(\max\{u,c\}+\phi\big).
\end{equation}
Similarly, plugging~\eqref{MI E MA2} into~\eqref{MI E MA0}
and simplifying one term, we see that
\begin{equation}\label{MI E MA4}
\int_\Omega
\osc_{B_r(x)}\min\{u,c\}\,dx\le
\int_\Omega
\osc_{B_r(x)}\big(\min\{u,c\}+\phi\big).
\end{equation}
{F}rom~\eqref{MI E MA3}, we find that~$\max\{u,c\}$
is a minimizer with respect to the perturbation~$\phi$, while
from~\eqref{MI E MA4}
it follows that~$\min\{u,c\}$
is also a minimizer with this perturbation.
These considerations and~\eqref{LULAMB}
imply~\eqref{LU}, as desired.

Now we observe that, for a.e.~$s\in\R$, we have that
\begin{equation}\label{LEB}
{\mbox{$\{u=s\}$
has zero Lebesgue measure,}}\end{equation}
otherwise the disjoint union of these sets would have locally infinite
Lebesgue measure, and therefore
\begin{equation}\label{PASS:LI1}
{\mbox{$
u_{\lambda,s}$ converges to $\chi_{\{u>s\}}$
in $L^1_{\rm loc}(\R^n)$, as~$\lambda\to+\infty$.}}
\end{equation}
So, thanks to Proposition~\ref{prominfunzioni}, we conclude from~\eqref{LU}
that~$\chi_{\{u>s\}}$ is a local minimizer in~$\Omega\ominus B_r$.
\end{proof}

As a consequence of
Theorem~\ref{EQUIVALENCE}, we obtain the next proposition,
which explains why in the definition of minimizer in Definition \ref{DEFIN2}
we allow competitors
in a neighborhood of width~$r$ of the boundary. 

\begin{proposition}\label{CLASSIFICATION:FAC}
Let  $u\in L^1_{\rm loc}(\R^n)$ such that for every ball $B$ 
$$ {\mathcal{E}}_{r,1}(u,B )\le {\mathcal{E}}_{r, 1}(u+\varphi, B)$$
for any~$\varphi\in L^1 (\R^n)$
with~$\varphi=0$ in~$\R^n\setminus B$.

Then~$u$ is necessarily constant.
\end{proposition}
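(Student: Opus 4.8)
The plan is to reduce the statement to a triviality result for level sets and then to exploit the fact that here, unlike in Definition~\ref{DEFIN2}, the perturbation is allowed in the \emph{whole} ball $B$ (not merely in $B\ominus B_r$). First I would re-run the truncation argument used in the proof of Theorem~\ref{EQUIVALENCE}: the identity~\eqref{OC COS} together with the two triangular estimates~\eqref{MI E MA1}--\eqref{MI E MA2} only involve the support of the perturbation, so they apply verbatim when $\varphi$ is supported in all of $B$. This shows that $\max\{u,c\}$ and $\min\{u,c\}$ inherit the same (full ball) minimality property for every $c\in\R$, and, passing to the limit in the family $u_{\lambda,s}$ of~\eqref{LULAMB}, that for a.e.~$s$ the level set $E_s:=\{u>s\}$ minimizes $\Per_r(\cdot,B)$ with respect to arbitrary modifications inside every ball $B$. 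Hence it suffices to prove that such a set $E_s$ is trivial (null or co-null), since then $u$ coincides a.e.\ with a constant.

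The next step is a competitor built from the ball itself. Given $z\in\partial E_s$ and $R>0$, I would compare $E_s$ with the two admissible competitors $E_s\cup B_R(z)$ and $E_s\setminus B_R(z)$. In both cases the boundary of the competitor inside $B_R(z)$, together with the $r$-neighbourhood of $\partial E_s\setminus\overline{B_R(z)}$, is contained in the collar $B_R(z)\setminus B_{R-r}(z)$; therefore
\[
\Per_r\big(E_s\cup B_R(z),B_R(z)\big),\ \Per_r\big(E_s\setminus B_R(z),B_R(z)\big)\ \le\ \frac{1}{2r}\,\big|B_R(z)\setminus B_{R-r}(z)\big|.
\]
By minimality this forces $\big|(\partial E_s\oplus B_r)\cap B_R(z)\big|\le\big|B_R(z)\setminus B_{R-r}(z)\big|$ for every $R$, i.e.\ $\partial E_s\oplus B_r$ has vanishing density at infinity. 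In particular, if $E_s$ or its complement is bounded, then for $R$ large one of the two competitors is null (resp.\ co-null) in $B_R(z)$ and has zero $\Per_r$, while $\Per_r(E_s,B_R(z))>0$ because $B_r(z)\subseteq\partial E_s\oplus B_r$; this contradiction shows $E_s$ is trivial in that case. The guiding model for why the collar term is genuinely cheaper is the one-dimensional computation: if $E_s=(-\infty,0)$ and $B=(-L,L)$, the competitor $(-\infty,L)$ pushes the interface onto $\partial B$, halving its contribution and dropping the energy from $2r$ to $r$.

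The hard part will be the remaining case, in which both $E_s$ and its complement are unbounded, the flat hyperplane-type interface $E_s=\{x_1<0\}$ being the prototype. Here $\partial E_s\oplus B_r$ has zero density but is nonempty, and the naive fill-in/empty-out competitors are \emph{not} decisive: a direct computation gives $\Per_r(E_s,B_R)\sim \omega_{n-1}R^{n-1}$ against $\Per_r(E_s\cup B_R,B_R)\sim \tfrac{n\omega_n}{4}R^{n-1}$, so filling the ball beats the flat interface only for $n\le2$ (with equal leading terms at $n=3$), the spanning hemisphere being too expensive in high dimension while the flat disk is area-minimizing. To close this case I would instead look for a sharper competitor that routes the interface partly along $\partial B_R$ on an optimally chosen spherical cap, or argue by a blow-down/dimension-reduction sending the problem to a cone and excluding nontrivial cones. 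Propagating the one-dimensional push-to-the-boundary gain to every dimension and to a general interface is precisely the delicate point, and I expect it to be the main obstacle.
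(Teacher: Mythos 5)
Your first step---showing that for a.e.\ $s$ the level set $E_s=\{u>s\}$ minimizes $\Per_r(\cdot,B)$ under perturbations supported in the \emph{whole} ball $B$---is exactly how the paper begins, and your observation that the truncation argument of Theorem~\ref{EQUIVALENCE} (via~\eqref{OC COS}, \eqref{MI E MA1}--\eqref{MI E MA2} and the family~\eqref{LULAMB}) is insensitive to the support of $\varphi$ is correct, indeed slightly more careful than the paper's bare citation of that theorem. Your collar estimate $\Per_r(E_s,B_R(z))\le\frac{1}{2r}\big|B_R(z)\setminus B_{R-r}(z)\big|$ is also valid, and it does dispose of the case in which $E_s$ or its complement is bounded.

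The genuine gap is the remaining case, and it is not a technicality: it is the entire content of the second (and only other) step of the paper's proof, which simply invokes Proposition~1.3 of~\cite{cdnv} to conclude that a set minimizing $\Per_r$ in every ball under full-ball perturbations must be $\varnothing$ or $\R^n$. You need this classification for \emph{unbounded} interfaces---the half-space being the prototype---and, as your own asymptotics show, the fill-in/empty-out competitors give $\Per_r(E_s\cup B_R,B_R)\sim\frac{n\omega_n}{4}R^{n-1}$ against $\Per_r(E_s,B_R)\sim\omega_{n-1}R^{n-1}$, which is a \emph{loss} for $n\ge4$ and only breaks even at $n=3$; so the one-dimensional ``push the interface onto $\partial B$'' gain does not propagate by this route. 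The alternatives you sketch (an optimized spherical cap, or a blow-down to cones together with a classification of minimizing cones) are named but not carried out, and neither is obviously easier than the excluded-case itself: the cap competitor faces the same area-versus-half-collar trade-off, and a cone classification would require a monotonicity-type formula for $\Per_r$ that is not available here because the functional is not scale invariant. As it stands the proof covers only the compactly supported level sets and leaves the crucial rigidity statement unproved.
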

\begin{proof} \ 
By Theorem~\ref{EQUIVALENCE}
it holds that, for a.e.~$s\in\R$, $E_s=\{u>s\}$  satisfies the property that 
for any measurable set~$F\subseteq\R^n$
with~$F\setminus B=E_s\setminus B$ it holds that
\[ \Per_r(E_s,B)\le\Per_r(F,B).\]
So, by \cite[Proposition 1.3]{cdnv}, 
either~$E_s=\varnothing$ or~$E_s=\R^n$.
As a consequence~$u$ is constant, as desired.
\end{proof} 

\section{Existence for the Dirichlet problem} \label{dirs}

We provide here
the proof of Theorem~\ref{EXISTENCE} about
Dirichlet problem for the functional~${\mathcal{E}}_{r,p}$
and the one-dimensional monotonicity property.

\begin{proof}[Proof of Theorem \ref{EXISTENCE}]
The existence
result is a straightforward application of the  direct method, recalling that the functional is weak lower semicontinuous. 
Also, since cutting a function at the level~$\pm L$
decreases its oscillation,
we can reduce our competitors to bounded functions with $L^\infty$ norm bounded by $\|u_o\|_{L^\infty(\Omega\oplus B_r)}$.

\smallskip

Suppose now that~$n=1$ and $u_o$ is nondecreasing, and let~$\Omega=(a,b)$, for some~$b>a\in\R$.
First of all, we show that a minimizer would not overcome the value of~$u_o(b)$
inside~$(a,b)$. Namely, 
given any~$u:(a-r,b+r)\to\R$, with $u=u_o$ for $x\in (a-r,a)\cup (b, b+r)$, we set
\begin{equation}\label{5566} \vartheta_u(x):=\left\{
\begin{matrix}
\min\{ u_o(b),\, u(x)\} & {\mbox{ if }} x\in(a,b)\\
u(x)(=u_o(x))& {\mbox{ if }}x\in (a-r, a]\cup [b,b+r).
\end{matrix}
\right. \end{equation}
By construction $u(x)\geq \vartheta_u(x)$ for all $x\in (a-r, b+r)$. 
We claim that, for any interval~$I\subseteq(a-r,b+r)$,
\begin{equation}\label{meg}
\osc_I \vartheta_u\le\osc_I u.
\end{equation}
 
It is easy to check using definitions  that if $x,y\in (a, b)$, then there holds \begin{equation}\label{4ca}
|\vartheta_u(x)-\vartheta_u(y)|\le| u(x)-u(y)|.
\end{equation}  
Indeed if either  both $u(x), u(y)\geq u_o(b)$, or $u(x), u(y)\leq u_o(b)$ there is nothing to prove. If $u(x)\geq u_o(b)>u(y)$, then $\vartheta_u(x)= u_o(b)$ and $\vartheta_u(y)=u(y)$, so 
\[|\vartheta_u(x)-\vartheta_u(y)|= u_o(b)-u(y)\leq u(x)-u(y)=|u(x)-u(y)|.\]
Therefore we get that for all $I\subseteq (a,b)$, then \eqref{meg} holds. 

Assume now that for some $\eps>0$ small, either $(a-\eps, a+\eps)\subseteq I$ or $(b-\eps, b+\eps)\subseteq I$. Then
we observe that $\inf_{I} u= \inf_I \vartheta_u$, so recalling that $u\geq \vartheta_u$, we conclude again that \eqref{meg} holds.

Notice that, as a consequence of~\eqref{meg},
\begin{equation}\label{SUBS:0}
{\mbox{if $u$ is a minimizer, then so is $\vartheta_u$}}.
\end{equation}

Let  $u:(a-r,b+r)\to\R$ be a minimizer. So,  $u=u_o$ in~$(a-r,a]\cup[b,b+r)$. Moreover,  eventually replacing $u$ with $\vartheta_u$, we can assume that 
$u\leq u_o(b)$ in $(a,b)$.

We denote by~$\eta_u$ the nondecreasing envelope of $u$, defined as 
\begin{equation}\label{DETA} \eta_u(x):= \sup_{\tau\in (a-r,x]} u(\tau).\end{equation}
By definition $\eta_u\geq u$, therefore $\inf_I \eta_u\geq \inf_I u$, for all $I\subseteq (a-r, b+r)$. 
Moreover, by monotonicity of $u_o$ and  since  $u\leq u_o(b)$ in $(a,b)$,  we get that $\eta_u=u_o$  in~$(a-r,a]\cup [b, b+r)$.   

So $\eta_u$ is a competitor for the minimizer~$u$. 
We claim that~$\eta_u$ is also a minimizer, namely
\begin{equation}\label{vetau}
{\mathcal{E}}_{r,p }(u, (a,b))\ge
{\mathcal{E}}_{r,p}(\eta_u, (a,b)).
\end{equation}
To this end, we show that, for any~$x\in(a,b)$,
\begin{equation}\label{osc mo}
\osc_{(x-r,x+r)} \eta_u   \le\osc_{(x-r,x+r)} u.
\end{equation}
First of all we observe that for almost every $x$, there holds
$\osc_{(x-r,x+r)} \eta_u =\eta_u(x+r)-\eta_u(x-r)$. 

%Suppose that, up to a negligible set,  $\eta_u$ is continuous at~$x+r$ (we recall that we identify $\eta_u$ with its right continuous representative) so that 
%\begin{equation}\label{osc mos}
%\inf_{(x-r,x+r)} \eta_u =\eta_u(x-r)\ge u(x-r)\ge\inf_{(x-r,x+r)} u\quad
%{\mbox{ and }}\quad
%\sup_{(x-r,x+r)} \eta_u =\eta_u(x+r).\end{equation}
We may suppose that $
\eta_u(x-r)<\eta_u(x+r),
$
otherwise we would have that $ \osc_{(x-r,x+r)} \eta_u=0\le\osc_{(x-r,x+r)} u,$ 
as desired.

We observe that 
\begin{equation}\label{019284}
{\mbox{for any $y\in(a-r,x-r]$, }}
\eta_u(x+r)>\eta_u(x-r)\ge \eta_u(y) \ge u(y).
\end{equation}
We claim that \begin{equation}\label{8uUUA}
\sup_{(x-r,x+r)} \eta_u=\sup_{(x-r,x+r)} u.
\end{equation}
To check this, let us assume, for a contradiction, that
$$ \sup_{(x-r,x+r)} \eta_u>\sup_{(x-r,x+r)} u.$$
Then, for any~$y\in(x-r,x+r)$, we have that
$u(y)\le \eta_u(x+r)-\alpha$, for some~$\alpha>0$.
Up to changing~$\alpha>0$, this holds true also for any~$y\in(a-r,x+r)$,
thanks to~\eqref{019284}.
Consequently, by~\eqref{DETA},
$$ \eta_u(x+r)= \sup_{y\in (a-r,x+r]} u(y) \le \eta_u(x+r)-\alpha,$$
which is of course a contradiction, which establishes~\eqref{8uUUA}.
This gives also ~\eqref{osc mo}, recalling that $\eta_u\geq u$. 

Then, from~\eqref{osc mo} we deduce~\eqref{vetau}. Hence, $\eta_u$ is a minimizer,
and it is monotone, as desired.
\end{proof}

\begin{remark}\label{REM:NP}
We stress that the minimizer given by Theorem~\ref{EXISTENCE}
is not necessarily unique (not even when~$p>1$).
Also, when~$n=1$,
it is not necessarily monotone (not even when~$u_o$ is monotone).
Finally, it is not necessarily continuous (not even when~$u_o$ is analytic).

We consider for example, $\Omega:=(-1,1)\subset\R$, $r:=3$ and $u_o(x):=x$.
Notice that, for any~$x\in(-1,1)$, it holds that~$x-3 <-1$
and~$x+3>1$. Accordingly, if~$v$
coincides with~$u_o$
outside~$(-1,1)$, then
\begin{eqnarray*}
 \sup_{(x-3,x+3)} v\ge u_o(x+3)=x+3\qquad 
\inf_{(x-3,x+3)} v\le u_o(x-3)=x-3,
\end{eqnarray*}
which implies that
$ \osc_{(x-3,x+3)} v\ge(x+3)-(x-3)=6,$
and thus
$ {\mathcal{E}}_{3,p }(v, (-1,1))
\ge 2\cdot 6^p.$
This says that any function~$u$ that
coincides with~$u_o$
outside~$(-1,1)$ and satisfies
$$ \sup_{(-1,1)} u\leq 1 \quad{\mbox{ and }}\quad
\inf_{(-1,1)}u\geq -1$$
is a minimizer in the sense of
Theorem~\ref{EXISTENCE}.
\end{remark}

\begin{remark}\label{NOZ}
It is interesting to point out that the ``inverse problem''
in Theorem~\ref{EXISTENCE} is not well posed, in the sense that
a minimizer~$u$ does not determine uniquely the datum~$u_o$.
For instance, while the null functions is obviously a minimizer
for null data, it may also be a minimizer for nontrivial data.

Assume e.g. that $n=1$ and~$\Omega=(a,b)$ for some~$b>a$, and  $$ u_o(x):=\left\{
\begin{matrix}
1 & {\mbox{ if }}x\in\left( a-r,a-\frac{r}{2}\right],\\
0 & {\mbox{ if }}x\in\left(a-\frac{r}{2},a\right]\cup
[b,b+r).
\end{matrix}
\right.$$

In this case the null function~$u$ in~$(a,b)$,
extended to~$u_o$ in~$(a-r,a]\cup[b,b+r)$ is a minimizer
according to Theorem~\ref{EXISTENCE}.
For this, we observe that if~$v=u_o$
in~$(a-r,a]\cup[b,b+r)$ and~$x\in \left( a,a+\frac{r}{2}\right)$,
it holds that~$\left\{ a-\frac{r}{2}\right\}\in (x-r,x+r)$,
and therefore ``$v$ sees the jump of~$u_o$ in such interval'', that is,
for any~$x\in \left( a,a+\frac{r}{2}\right)$,
$$ \osc_{(x-r,x+r)} v\ge 1.$$
This implies that
\begin{equation}\label{DASOTT}
{\mathcal{E}}_{r,p }(v, (a,b))\ge \int_a^{a+\frac{r}2} 
\left(\osc_{(x-r,x+r)} v\right)^p
\,dx\ge \frac{r}{2}.\end{equation}
Now, the null function~$u$
extended to~$u_o$ in~$(a-r,a]\cup[b,b+r)$ satisfies,
for any~$x\in \left( a,a+\frac{r}{2}\right)$,
$ \osc_{(x-r,x+r)} u = 1$ 
and, for any~$x\in \left( a+\frac{r}{2},b\right)$, $ \osc_{(x-r,x+r)} u = 0.$
Consequently, we have that
$$ {\mathcal{E}}_{r,p }(u, (a,b))= \int_a^{a+\frac{r}2}
\left(\osc_{(x-r,x+r)} v\right)^p
\,dx=\frac{r}{2}.$$
By comparing this with~\eqref{DASOTT}, we conclude that~$u$
is a minimizer, as desired.
\end{remark}

\section{Rigidity properties of minimizers in dimension $1$} \label{rigs}

In this section, we provide some rigidity results about
Class~A minimizers in dimension~1 for
the functional~$\mathcal{E}_{r,p}$,
both in the cases~$p=1$ and~$p>1$.

We start with the following result:

\begin{proposition}\label{minmon}
Let~$u\in L^1_{\rm loc}(\R)$ be 
a Class~A minimizer for 
the functional~$\mathcal{E}_{r,p}$, for some~$p\geq 1$.
Then~$u$ is monotone.
\end{proposition}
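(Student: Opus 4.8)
\emph{Plan.} I argue by contradiction: assume that the Class~A minimizer $u$ is not monotone and produce, in some ball, an admissible competitor of strictly smaller energy, contradicting Definition~\ref{DEFIN2}. Since $\mathcal{E}_{r,p}(-u,\Omega)=\mathcal{E}_{r,p}(u,\Omega)$ and adding a constant leaves the functional and the minimality class unchanged, I may reduce the two possible ``non-monotone'' patterns to a single one: it suffices to treat a \emph{peak} (the \emph{valley} case follows by replacing $u$ with $-u$). If $u$ has no monotone representative, then it is neither essentially nondecreasing nor essentially nonincreasing, so there exist Lebesgue points $x_1<x_2<x_3$ and a level $c$ with
$$
u(x_1)<c,\qquad u(x_3)<c,\qquad u(x_2)>c .
$$

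\emph{The localized horizontal cut.} Around the peak I isolate the maximal super-level ``bump'': let $\sigma\in(x_1,x_2)$ be the right endpoint of the sub-level region containing $x_1$ and let $\tau\in(x_2,x_3)$ be the left endpoint of the sub-level region containing $x_3$, so that $u\ge c$ a.e.\ on $(\sigma,\tau)$ while immediately to the left of $\sigma$ and to the right of $\tau$ the function takes values $<c$. I then define the competitor
$$
v:=\begin{cases}\min\{u,c\} & \text{on }(\sigma,\tau),\\[2pt] u & \text{elsewhere,}\end{cases}
$$
so that $v\le u$ and $v-u$ is supported in $(\sigma,\tau)\subset(x_1,x_3)$. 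Choosing the ball $B\supset(x_1-r,x_3+r)$ centred on the peak we have $(\sigma,\tau)\subset(x_1,x_3)\subseteq B\ominus B_r$ (recall \eqref{omegar}), hence $\varphi:=v-u$ is an admissible perturbation for the minimality of $u$ in $B$.

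\emph{Energy monotonicity of the cut and the main obstacle.} The heart of the matter is to check that the cut does not increase the oscillation on \emph{any} window $(x-r,x+r)$, and strictly decreases it on a set of positive measure. Since $v\le u$ with equality outside $(\sigma,\tau)$, the supremum over every window can only decrease. For the infimum, the delicate point is the windows straddling $\sigma$ or $\tau$: here one uses that, by maximality of the bump, such a window always reaches into an adjacent sub-level region and therefore already contains points where $u<c$; as these values are left unchanged by the cut, the infimum is \emph{unchanged}, and consequently $\osc_{(x-r,x+r)}v\le\osc_{(x-r,x+r)}u$. For windows lying entirely inside $(\sigma,\tau)$ the competitor is the constant $c$, so the oscillation drops to $0$. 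Finally, for windows centred near $x_2$ the old supremum is close to $u(x_2)>c$ while the new one is $\le c$, and the infimum is untouched, so the oscillation strictly decreases on a set of positive measure; this yields $\mathcal{E}_{r,p}(v,B)<\mathcal{E}_{r,p}(u,B)$, the desired contradiction. The genuine difficulty, which I expect to require the most care, is precisely this straddling-window analysis carried out in the essential-value (a.e.) setting: one must verify rigorously that cutting only the bounded bump never raises the infimum of a nearby window, which is what makes the global oscillation-decreasing identity usable after localization. For the case $p=1$ an alternative, shorter route is available: by Theorem~\ref{EQUIVALENCE} each level set $\{u>s\}$ is a Class~A minimizer for $\Per_r$, and the one-dimensional classification of such sets (in the spirit of Proposition~\ref{CLASSIFICATION:FAC} and \cite{cdnv}) forces them to be half-lines, whence $u$ is monotone; but the cutting argument above has the advantage of covering all $p\ge1$ at once.
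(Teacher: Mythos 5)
Your reduction to a ``peak'' and the weak inequality $\osc_{(x-r,x+r)}v\le\osc_{(x-r,x+r)}u$ for the localized cut are essentially sound (modulo a fixable point: defining $\sigma$ as the right endpoint of the sub-level component containing $x_1$ and $\tau$ as the left endpoint of the one containing $x_3$ does not force $u\ge c$ a.e.\ on $(\sigma,\tau)$, since other sub-level components may sit between them; you need $(\sigma,\tau)$ to be the component of the essential super-level set containing $x_2$). The genuine gap is the \emph{strict} decrease on a set of positive measure, which is the only thing that can contradict minimality. Your assertion that ``for windows centred near $x_2$ the old supremum is close to $u(x_2)>c$ while the new one is $\le c$'' ignores that the supremum over such a window may be attained \emph{outside} the bump, at points within distance $r$ of it, where the cut changes nothing. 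Concretely, take $r=1$ and
\[
u=10 \ \mbox{on}\ \left(-\infty,-\tfrac1{10}\right),\quad u=0\ \mbox{on}\ \left(-\tfrac1{10},0\right),\quad u=2\ \mbox{on}\ (0,1),\quad u=0\ \mbox{on}\ \left(1,\tfrac{11}{10}\right),\quad u=10\ \mbox{on}\ \left(\tfrac{11}{10},+\infty\right).
\]
The only peak configurations have $x_2\in(0,1)$, $c\in(0,2)$ and bump $(\sigma,\tau)=(0,1)$, so your competitor is $v=c$ on $(0,1)$ and $v=u$ elsewhere. But every window $(x-1,x+1)$ meeting $(0,1)$ contains both a point where $u=10$ and a point where $u=0$, and these values are untouched by the cut; hence $\osc_{(x-1,x+1)}v=10=\osc_{(x-1,x+1)}u$ for every such window and the energies of $u$ and $v$ coincide in every ball. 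Cutting this peak therefore yields no contradiction, even though $u$ is not monotone (and is indeed not a Class~A minimizer: filling the valley $\left(-\tfrac1{10},0\right)$ up to level $1$ \emph{does} strictly decrease the energy). So ``pick any peak and cut it'' cannot work: the level and the interval must be chosen in terms of the global behaviour of $u$.

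This is precisely what the paper's proof arranges and what is missing from yours. The paper selects $a<b$ whose essential value intervals overlap at some $c$ with $\osc_{(a,b)}u>0$ (condition~\eqref{cond}), first disposes of the case $b-a>2r$, where the interior windows $(x-r,x+r)\subseteq(a,b)$ produce the strict drop (this is~\eqref{irthy568}, and it is exactly the step that kills the example above), and then takes $[a,b]$ \emph{maximal}. Maximality delivers the structural alternative \eqref{uno}--\eqref{uno1}: outside $[a,b]$ the function stays strictly on one side of $c$ on each side, which is the one-sidedness needed to guarantee that the supremum and infimum of a straddling window cannot be restored by values of $u$ lying outside the modified interval, whence the strict decrease. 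Without a maximality (or equivalent one-sidedness) step your contradiction does not follow. Finally, the alternative route you sketch for $p=1$ via Theorem~\ref{EQUIVALENCE} would require a classification of one-dimensional Class~A minimizers of $\Per_r$ as half-lines; this is not what Proposition~\ref{CLASSIFICATION:FAC} states, and you do not prove it.
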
 

\begin{proof} We suppose by contradiction that~$u$ is not monotone.    
First of all, we observe that $u$ has to be locally bounded. 

For any $x\in\R$, we denote by 
\[\underline{u}(x) :=\sup_{\eps>0}\inf_{(x-\eps, x+\eps)} u
\leq \overline{u}(x) :=\inf_{\eps>0}\sup_{(x-\eps, x+\eps)} u.\]
Note that, for any
Lebesgue point~$x$, we get that $\underline{u}(x)\leq u(x)\leq \overline{u}(x)$.

Moreover, for any Lebesgue point~$x$ of~$u$, it holds that 
\begin{equation}\label{leb} 
\inf_{(x-\eps, x+\eps)}u\leq u(x)\leq \sup_{(x-\eps, x+\eps)}u.\end{equation}
To check~\eqref{leb}, we argue by contradiction and we suppose, for instance, that 
there exists~$\delta>0$ such that 
$$ \sup_{(x-\eps, x+\eps)}u-u(x)=-\delta<0.$$
Thus, for every~$\tilde\eps\in(0,\eps)$,
$$\sup_{(x-\tilde\eps, x+\tilde\eps)}u\leq  u(x)-\delta.$$ 
From this we deduce that 
$$\lim_{\tilde\eps\to 0}\frac{1}{\tilde\eps} \int_{x-\tilde\eps}^{x+\tilde\eps}
|u(y)-u(x)|\,dy\geq \delta>0,$$
which is in contradiction with the fact that $x$ is a Lebesgue point for $u$.
This proves~\eqref{leb}.

Now, since~$u$ is not monotone, we can suppose that there exist~$a<b$ such that
\begin{equation} 
\label{cond} 
[\underline{u}(a), \overline{u}(a)]\cap [\underline{u}(b), \overline{u}(b)]\not= 
\varnothing\qquad \text{and } \qquad \osc_{(a,b)} u>0.\end{equation}
In virtue of~\eqref{cond}, we let 
\[c\in [\underline{u}(a), \overline{u}(a)]\cap [\underline{u}(b), \overline{u}(b)]\]
and define the function  
\begin{equation}\label{utilde} \tilde u(x):=\begin{cases}u(x) & {\mbox{ if }} x<a {\mbox{ and }}
x>b,\\ 
c & {\mbox{ if }} x\in [a,b].\end{cases}\end{equation} 
Then, we get that 
\[\osc_{B_r(x)} \tilde u\leq  \osc_{B_r(x)}u \qquad {\mbox{ for any }} x\in  \R^n.\]
Now, we observe that, 
if $a,b$ are given by~\eqref{cond},
then necessarily 
\begin{equation}\label{irthy568}
b-a\leq 2r.\end{equation}
Indeed, if on the contrary $b-a>2r$,
for all $x\in (a+r, b-r)$, we have that~$
\osc_{B_r(x)} \tilde u=0$, thanks to~\eqref{utilde}.
On the other hand, since~$\osc_{(a,b)} u>0$ (recall~\eqref{cond}), 
there exists a set~$E\subseteq (a+r, b-r)$ of
positive measure such that~$\osc_{B_r(x)}  u>0$ for all~$x\in E$.
Hence, we would get that 
$$\mathcal{E}_{r,p}(\tilde u, (a+r, b-r))<
\mathcal{E}_{r,p}(u, (a+r, b-r)),$$ 
which contradicts the minimality of~$u$.
This proves~\eqref{irthy568}.

Now, we fix~$a$ and~$b$ to be the maximal ones for which~\eqref{cond}
holds true (namely, we suppose that 
it is not possible to find another couple~$a'$ and~$b'$
such that~$a'<a$, $b'>b$ and~\eqref{cond} is satisfied). 
In this case, we can show that   
\begin{equation}\label{uno}
{\mbox{either }}\;
\underline{u}(x)>c \,{\mbox{ for any }} x<a\; {\mbox{ and}}\;
\overline{u}(x)<c 
\,{\mbox{ for any }} x>b,
\end{equation}
\begin{equation}\label{uno1}
{\mbox{or }}\;\overline{u}(x)<c \,{\mbox{ for any }} 
x<a\; {\mbox{ and }}\; \underline{u}(x)>c \,{\mbox{ for any }} x>b.
\end{equation} 
Indeed if we were not in this situation, then the maximality of the
couple~$a$, $b$ would be contradicted.
%% \begin{equation}\label{due}\forall x<a, \forall x>b
%% \quad \text{either } \underline{u}(x)>c 
%% \quad \text{or } \quad \overline{u}(x)<c.\end{equation}
%% But this will be in contradiction with the assumption that
%% the couple $a,b$ is the maximal one since actually
%% in this case we could find $a'<a$ and $b'>b$
%% such that \eqref{cond} holds. 

{F}rom now on, we suppose that~\eqref{uno} is satisfied
(being the case~\eqref{uno1} completely analogous).
Since~$\osc_{(a,b)} u>0$ (in virtue of~\eqref{cond}),
we get that there exists an interval~$(\alpha,\beta)\subset \subset(a,b)$
such that either~$\sup_{(\alpha, \beta)} u> c$
or~$\inf_{(\alpha,\beta)}u<c$.

Assume for instance that~$\sup_{(\alpha, \beta)} u> c$
and fix any $x\in (a+r, \alpha+r)$. Then, we have that~$a<x-r<\alpha$,
and so necessarily $x+r>b>\beta$, due to~\eqref{irthy568}.  
In this way, we have that
\[\osc_{B_r(x)} u=\sup_{(x-r, x+r)}u-\inf_{(x-r, x+r)}u\geq \sup_{(\alpha,\beta)} u-\inf_{(x-r, x+r)}u>c-\inf_{(b, x+r)}u=\osc_{B_r(x)} \tilde u.\] 
This implies that
\begin{equation*}
\mathcal{E}_{r,p}(\tilde u, (a+r,\alpha+r))<\mathcal{E}_{r,p}
(u, (a+r,\alpha+r)),
\end{equation*} 
which is in contradiction with
the fact that~$u$ is a Class~A minimizer.
This completes the proof of Proposition~\ref{minmon}.
\end{proof} 

In particular, in the case~$p=1$ we have the following characterization of Class~A
minimizers:
 
\begin{theorem}\label{LAR}
A function~$u\in L^1_{\rm loc}(\R)$ is
a Class~A minimizer for  $\mathcal{E}_{r,1}$
if and only if it is monotone.
\end{theorem}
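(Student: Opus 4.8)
The forward implication is immediate: if $u$ is a Class~A minimizer for $\mathcal{E}_{r,1}$, then Proposition~\ref{minmon} (with $p=1$) gives at once that $u$ is monotone. The substance of the statement is therefore the converse, namely that every monotone $u\in L^1_{\rm loc}(\R)$ is a Class~A minimizer for $\mathcal{E}_{r,1}$. Up to replacing $u$ by $-u$, which leaves $\mathcal{E}_{r,1}$ and the class of admissible perturbations unchanged, I may assume that $u$ is nondecreasing. I then fix an arbitrary interval $\Omega=(a,b)$ and aim to prove that $u$ is a minimizer of $\mathcal{E}_{r,1}(\cdot,\Omega)$; when $b-a\le 2r$ the set $\Omega\ominus B_r$ is empty, only the trivial perturbation is admissible, and there is nothing to prove, so I may assume $b-a>2r$.

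The plan is to invoke the ``Viceversa'' part of Theorem~\ref{EQUIVALENCE}: it suffices to show that, for a.e.\ $s\in\R$, the superlevel set $E_s:=\{u>s\}$ is a minimizer of $\Per_r$ in $\Omega$. Since $u$ is nondecreasing, for a.e.\ $s$ the set $E_s$ is a half-line; the cases $E_s=\varnothing$ and $E_s=\R$ are trivial (then $\partial E_s=\varnothing$ and $\Per_r(E_s,\Omega)=0$), so I reduce to $E_s=(t,+\infty)$ with $\partial E_s=\{t\}$. Recalling the definition, $\Per_r(E_s,\Omega)=\frac1{2r}\mathcal{L}^1\big((t-r,t+r)\cap(a,b)\big)$, and I must compare this with $\Per_r(F,\Omega)$ for any competitor $F$ coinciding with $E_s$ outside $\Omega\ominus B_r=(a+r,b-r)$.

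The heart of the argument is a dichotomy on the location of the jump point $t$. If $t\notin(a+r,b-r)$, then the transition of $E_s$ at $t$ takes place inside the frozen region $(-\infty,a+r]\cup[b-r,+\infty)$ on which $F=E_s$; hence $t$ is a boundary point of $F$ as well, so $(t-r,t+r)\subseteq(\partial F)\oplus B_r$ and therefore $\Per_r(F,\Omega)\ge\frac1{2r}\mathcal{L}^1\big((t-r,t+r)\cap(a,b)\big)=\Per_r(E_s,\Omega)$, the possible extra boundary of $F$ inside $(a+r,b-r)$ only enlarging $(\partial F)\oplus B_r$. If instead $t\in[a+r,b-r]$, then $F$ excludes a left neighbourhood of $a+r$ and contains a right neighbourhood of $b-r$ (because $E_s$ does), so by connectedness $F$ must possess a boundary point $p\in[a+r,b-r]$; since $a+r\le p\le b-r$ forces $(p-r,p+r)\subseteq(a,b)$, one gets $\Per_r(F,\Omega)\ge\frac1{2r}\mathcal{L}^1\big((p-r,p+r)\big)=1=\Per_r(E_s,\Omega)$. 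In both cases $E_s$ is a minimizer of $\Per_r$ in $\Omega$, which is exactly the hypothesis needed.

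I expect the delicate point to be the second case, that is, making rigorous the claim that a competitor interpolating between ``empty on the left collar'' and ``full on the right collar'' necessarily produces a measure-theoretic boundary point in $[a+r,b-r]$, and that such a point contributes the full length $2r$ to $(\partial F)\oplus B_r$. This has to be phrased using the convention that identifies $F$ with its points of density one and $\partial F$ with the topological boundary of that set, carefully discarding the sets of measure zero. Once this is settled, Theorem~\ref{EQUIVALENCE} yields that $u$ is a minimizer of $\mathcal{E}_{r,1}(\cdot,\Omega)$ in every interval $\Omega$, hence a Class~A minimizer, completing the converse.
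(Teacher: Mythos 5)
Your argument is correct, but it follows a genuinely different route from the paper's. The paper proves the converse directly at the level of the oscillation: it first shows that for a.e.\ $x$ one has $\osc_{(x-r,x+r)}v\ge|v(x+r)-v(x-r)|$, with equality when $v$ is monotone; then, for a competitor $v$ agreeing with $u$ outside $(a+r,b-r)$, it integrates this inequality over $(a,b)$, pulls the absolute value outside the integral, and changes variables to rewrite $\int_a^b\big(v(x+r)-v(x-r)\big)\,dx$ as $\int_{b-r}^{b+r}v-\int_{a-r}^{a+r}v$, a quantity depending only on the frozen boundary data, so the chain of inequalities closes up on $\int_a^b\osc_{(x-r,x+r)}u\,dx$. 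You instead route everything through the level-set equivalence of Theorem~\ref{EQUIVALENCE} and reduce to the purely geometric fact that half-lines minimize $\Per_r$ in an interval, verified by the dichotomy on the position of the jump point. Both work: the paper's computation is more self-contained (it does not invoke Theorem~\ref{EQUIVALENCE}, whose proof is itself nontrivial) and its first half is reused for $p>1$ via Jensen in Proposition~\ref{LAR2}, whereas your argument isolates the geometric content more transparently but is tied to $p=1$ through the coarea formula. One small point to tidy: your two cases overlap at $t\in\{a+r,b-r\}$, and for those two values the case-1 reasoning (``$t$ is a boundary point of $F$ as well'') can fail, since $F$ is unconstrained on one side of $t$; this is harmless because your case~2, stated for the closed interval $[a+r,b-r]$, covers exactly these values, but you should phrase the dichotomy as $t\notin[a+r,b-r]$ versus $t\in[a+r,b-r]$ so that each value of $t$ is handled by the branch whose argument actually applies.
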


\begin{proof} 
Assume first that~$u\in L^1_{\rm loc}(\R)$ is
a Class~A minimizer. Then, by Proposition \ref{minmon},
we conclude that~$u$ is monotone. 

Let now assume that~$u$ is a monotone function and
we prove that $u$ is a Class~A minimizer. 
First of all, we observe that if
$v\in L^1_{\rm loc}(\R)$, then, for almost every $x\in\R$, there holds 
\begin{equation}\label{54y85njsfd}
\osc_{(x-r,x+r)} v\ge |v(x+r)-v(x-r)|,\end{equation}
with equality if~$v$ is monotone.
Indeed, arguing as in \eqref{leb},
we get that at all the Lebesgue points~$x$ of~$v$, it holds that 
$$\inf_{(x,x+r)} v\leq v(x)\leq \sup_{(x, x+r)} v,$$
and similarly
$$\inf_{(x-r,x)} v\leq v(x)\leq \sup_{(x-r, x)} v,$$ 
which imply~\eqref{54y85njsfd}.

Now, we fix an interval~$(a,b)\subseteq\R$ with~$b-a>2r$ and we take
a function~$v\in L^1_{\rm loc}(\R)$ which 
coincides with~$u$ outside~$(a+r,b-r)$.
Using~\eqref{54y85njsfd} and making suitable changes
of variables, we obtain that 
\begin{eqnarray*} \int_{a}^{b} \osc_{(x-r,x+r)} v\,dx&\ge &
\int_{a}^{b}\big|
v(x+r)-v(x-r)\big|\,dx \\&\ge&\left| \int_{a}^{b}\big(
v(x+r)-v(x-r)\big)\,dx\right|\\&=&
\left|\int_{b-r}^{b+r} v(y)\,dy-
\int_{a-r}^{a+r} v(y)\,dy\right|
\\&=& \left|\int_{b-r}^{b+r} u(y)\,dy-
\int_{a-r}^{a+r} u(y)\,dy\right|\\ &=&\left|\int_{a}^{b}\big(
u(x+r)-u(x-r)\big)\,dx\right|\\&=& \int_{a}^{b}\big|
u(x+r)-u(x-r)\big|\,dx\\&=&\int_{a}^{b} \osc_{(x-r,x+r)} u\,dx
\end{eqnarray*}
where the last two equalities come from the fact
that $u$ is monotone, and~\eqref{54y85njsfd} has been used
once again in this case. 
This shows that~$u$ is
a Class~A minimizer, and so the proof of Theorem~\ref{LAR}
is completed.
\end{proof}

In the case  $p>1$ we do not have a
complete description of Class~A minimizers, but we can state
the following two results: 

\begin{proposition}\label{LAR2}
Let $p>1$. 
Let $u\in L^1_{\rm loc}(\R)$  be a monotone function such that
$u(x)=Cx+\phi(x)$, for 
some $C\in\R$ and $\phi\in L^1_{\rm loc}(\R)$ which is $2r$-periodic.
Then~$u$
is a Class~A minimizer for $\mathcal{E}_{r,p}$.  
\end{proposition}

\begin{proof}
We observe that, by the definition of~$u$,
for a.e. $x\in\R$, 
\begin{equation}\label{m} \osc_{(x-r,x+r)} u
=|C(x+r)+\phi(x+r)-C(x-r)-\phi(x-r)|=2|C|r.\end{equation} 
Now, we fix an interval~$(a,b)\subseteq\R$ and a function~$
v\in L^1_{\rm loc}(\R)$ which 
coincides with~$u$ outside~$(a+r,b-r)$.
Reasoning as in the proof of Theorem \ref{LAR}, since $u$ is
monotone, one can prove that 
\[ \int_{a}^{b} \osc_{(x-r,x+r)} v\,dx\ge
\int_{a}^{b} \osc_{(x-r,x+r)} u\,dx.\]
Using this and the Jensen inequality, we get that
\begin{eqnarray*}\frac{1}{b-a}
\int_{a}^{b} \left(\osc_{(x-r,x+r)} v\right)^p\,dx &\geq &
\left( \frac{1}{b-a}\int_{a}^{b} \osc_{(x-r,x+r)} v\,dx\right)^p\\ &\geq&
\left( \frac{1}{b-a}\int_{a}^{b} \osc_{(x-r,x+r)} u\,dx\right)^p\\
&=& (2|C|r)^p
,\end{eqnarray*}
where in the last equality we used~\eqref{m}. 
This permits to conclude that
\[ \int_{a}^{b} \left(\osc_{(x-r,x+r)} v\right)^p\,dx\geq (b-a)
(2|C|r)^p=\int_{a}^{b} \left(\osc_{(x-r,x+r)} u\right)^p\,dx,\]
and so $u$ is a Class~A minimizer for $\mathcal{E}_{r,p}$, as desired. 
\end{proof} 

\begin{proposition}\label{LAR3}
Let $p>1$.
Let~$u\in L^1_{\rm loc}(\R)$ be 
a Class~A minimizer for  $\mathcal{E}_{r,p}$.
Suppose that~$u$ is strictly monotone.
Then, there exist~$C\neq 0$ and~$\phi\in L^1_{\rm loc}(\R)$
which is $2r$-periodic, such that $u(x)=Cx+\phi(x)$.
\end{proposition}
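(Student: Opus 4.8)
The plan is to prove that the oscillation density
\[
g(x) := \osc_{(x-r,x+r)} u = u(x+r)-u(x-r)
\]
is constant almost everywhere, after which the conclusion is immediate. Here I use Proposition~\ref{minmon} and the strict monotonicity to assume, without loss of generality, that $u$ is nondecreasing, so that $g\ge 0$ (in fact $g>0$), and I use that for monotone functions the oscillation equals the increment (recall~\eqref{54y85njsfd}). The reduction is the following: if $g\equiv 2Cr$ for a constant $C$, then $u(x+2r)-u(x)=g(x+r)=2Cr$ for every $x$, so $\phi(x):=u(x)-Cx$ satisfies $\phi(x+2r)=\phi(x)$ and is $2r$-periodic, with $C=g/(2r)>0$, hence $C\neq 0$. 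Everything therefore reduces to the constancy of $g$, and it is precisely here that the strict convexity of $t\mapsto t^p$ for $p>1$ must enter (for $p=1$ \emph{every} monotone function is a minimizer, by Theorem~\ref{LAR}).

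First I would show that $g$ is $2r$-periodic by writing the Euler--Lagrange condition. Fix a long interval $(a,b)$ (with $b-a>2r$) and a perturbation $\psi\in C_c^\infty((a+r,b-r))$; then $u+t\psi$ is an admissible competitor for every $t$, so $t\mapsto \mathcal{E}_{r,p}(u+t\psi,(a,b))$ attains a minimum at $t=0$. Since $u$ is strictly monotone, on each window $(x-r,x+r)$ its essential supremum and infimum are \emph{uniquely} attained at the endpoints $x+r$ and $x-r$; hence the envelope (Danskin) theorem gives, for a.e.\ $x$, that the derivative of $\osc_{(x-r,x+r)}(u+t\psi)$ at $t=0$ equals $\psi(x+r)-\psi(x-r)$. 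Differentiating under the integral sign yields the stationarity identity
\[
\int_a^b g(x)^{p-1}\big(\psi(x+r)-\psi(x-r)\big)\,dx=0 .
\]
After the changes of variables $y=x\pm r$, and using that $\psi$ is supported in $(a+r,b-r)$, this becomes $\int_{a+r}^{b-r}\big(g(y-r)^{p-1}-g(y+r)^{p-1}\big)\psi(y)\,dy=0$ for all such $\psi$, whence $g(y-r)^{p-1}=g(y+r)^{p-1}$ a.e. As $p>1$, the map $t\mapsto t^{p-1}$ is injective on $[0,+\infty)$, so $g(z+2r)=g(z)$ for a.e.\ $z$; since $(a,b)$ was arbitrary, $g$ is $2r$-periodic.

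Next I would upgrade periodicity to constancy using only the monotonicity of $u$. Set $h(x):=u(x+2r)-u(x)=g(x+r)$, which is $2r$-periodic by the previous step; telescoping together with periodicity gives $u(x+2nr)=u(x)+n\,h(x)$ for every $n\in\N$. Suppose, for a contradiction, that $h$ is not a.e.\ constant, so there are points $x_1,x_2$ with $h(x_1)<h(x_2)$. Choosing an integer $m$ with $2mr>x_2-x_1$ we have $x_1+2(n+m)r>x_2+2nr$, and monotonicity forces $u(x_1)+(n+m)h(x_1)\ge u(x_2)+n\,h(x_2)$, that is $n\big(h(x_1)-h(x_2)\big)\ge u(x_2)-u(x_1)-m\,h(x_1)$. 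The left-hand side tends to $-\infty$ as $n\to+\infty$ while the right-hand side is constant, a contradiction. Hence $h$, and therefore $g$, is constant, and the proof concludes as in the first paragraph.

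I expect the main obstacle to be the rigorous justification of the first variation in the second step, since the oscillation is a (nonsmooth) difference of a supremum and an infimum and $u$ need not be continuous. The resolution is that for a strictly monotone $u$ the relevant extrema sit at the endpoints of each window, so for almost every $x$ (namely those $x$ for which $x\pm r$ are continuity points of $u$, a full-measure set because a monotone function has at most countably many jumps) the envelope theorem applies and delivers a genuine two-sided derivative equal to $\psi(x+r)-\psi(x-r)$; a dominated-convergence argument then legitimizes differentiation under the integral. I anticipate this measure-theoretic bookkeeping, rather than any conceptual difficulty, to be the delicate point of the argument.
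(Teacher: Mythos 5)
Your proposal is correct and follows essentially the same route as the paper: derive the first-variation identity $\int g^{p-1}(\psi(x+r)-\psi(x-r))\,dx=0$, change variables to get $2r$-periodicity of the increment, and then upgrade periodicity to constancy by telescoping $u(x+2nr)=u(x)+n\,h(x)$ against monotonicity. Your only real deviation is technical but welcome: you justify the differentiation of the oscillation via the envelope (Danskin) argument for arbitrary $t$, whereas the paper restricts to perturbations $\delta\psi$ keeping $u+\delta\psi$ monotone, which is a slightly more delicate way to reach the same stationarity condition.
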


\begin{proof}
We suppose that~$u$ is strictly increasing
(being the other case similar). 
We fix $a<b$ such that $b-a>2r$,
and we take a function~$\psi\in C^{\infty}(\R)$
such that~$\psi= 0$ in~$(-\infty, a+r]\cup [b-r, +\infty)$.
Then, for every $\delta\in \R$ such that $u+\delta\psi$ is
still nondecreasing in $(a,b)$, we get that 
$$ \int_{a}^{b}\left( \osc_{(x-r,x+r)} (u+\delta\psi)\right)^p\,dx\ge
\int_{a}^{b}\left( \osc_{(x-r,x+r)}
u\right)^p\,dx,$$
since~$u$ is a Class~A minimizer. Namely, we see that
$$
\int_{a}^{b} \Big(u(x+r)-u(x-r)+\delta(\psi(x+r)-\psi(x-r))\Big)^p\,dx\ge
\int_{a}^{b} \big(u(x+r)-u(x-r)\big)^p\,dx.
$$
This implies that
\[p\int_a^b \Big(u(x+r)-u(x-r)\Big)^{p-1} 
\big(\psi(x+r)-\psi(x-r)\big)\,dx=0. \]
Hence, recalling that~$\psi= 0$ in $(-\infty, a+r]\cup [b-r, +\infty)$,
this gives that
\[\int_{a+r}^{b-r}  \left(u(x)-u(x-2r)\right)^{p-1}\psi(x)\,dx-
\int_{a+r}^{b-r} \left(u(x+2r)-u(x)\right)^{p-1}\psi(x)\,dx=0.\]
As a consequence, using the fact that~$u$ is strictly monotone,
we get the following condition on $u$:
\begin{equation}\label{ire57hf07897}
u(x+2r)-u(x)=u(x)-u(x-2r)\qquad \text{for a.e. }x\in\R. 
\end{equation}
Let now~$C(x):=u(x)-u(x-2r)$.
Then, we have that~$C(x)> 0$ for all $x$, and~$C(x+2kr)=C(x)$
for all $k\in\Z$. Also, from~\eqref{ire57hf07897} we get that,
for every~$k\in\Z$, 
\begin{equation}\label{e} u(x+2kr)=u(x)+k[u(x)-u(x-2r)]=u(x)+k C(x).\end{equation} 
We claim now that 
\begin{equation}\label{ccost}
{\mbox{$C(x)\equiv C$, for some $C>0$.}}\end{equation}
To prove~\eqref{ccost}, we assume on the contrary that there
exist~$x_1$, $x_2$ such that~$|x_1-x_2|<2r$ and~$C(x_1)>C(x_2)$. 
We fix $k_0\in\N$  sufficiently large  such that $C(x_1)>\frac{k+1}{k} C(x_2)$ for all $k\in\N$ such that $k\geq k_0$. 
Then, for all $k\geq k_0$,  using \eqref{e}, and recalling that~$u$
is strictly
monotone and that~$x_2+2(k+1)r>x_1+2kr$,  we get 
\[u(x_1)+kC(x_1)=u(x_1+2kr)<u(x_2+2(k+1)r)=u(x_2)+(k+1)C(x_2)
<u(x_2)+k C(x_1).\]
This implies that~$u(x_1)<u(x_2)$, and therefore~$x_1<x_2$, 
which gives that $C$ is monotone. 
But this is in contradiction with the fact that $C$ is $2r$-periodic,
and so~\eqref{ccost} is proved.

Now we define the function $\phi(x):=u(x)-\frac{C}{2r} x$. 
We have that~$\phi\in L^1_{\rm loc}(\R)$.
Moreover, using \eqref{e}, we check that $\phi$ is a $2r$-periodic
function:
$$ \phi(x+2kr)=u(x+2kr)-\frac{C }{2r}x -\frac{C }{2r}2kr 
= u(x)+kC -\frac{C}{2r}x -Ck= \phi(x). $$
Hence, the proof of Proposition~\ref{LAR3} is complete.
\end{proof} 

With this, we can now summarize the previous results,
thus completing the proof of Theorem~\ref{THM3}.

\begin{proof}[Proof of Theorem~\ref{THM3}]
The claim in~(1) follows from Proposition~\ref{minmon}
whereas the claim in~(2) is a consequence of
Theorem~\ref{LAR}. Furthermore, the claim in~(3)
is warranted by Proposition~\ref{LAR2}
and the one in~(4) follows from Proposition~\ref{LAR3}.\end{proof}

\section*{References}

\begin{biblist}[\normalsize]

\bib{MR1857292}{book}{
    AUTHOR = {Ambrosio, Luigi},
    AUTHOR = {Fusco, Nicola},
    AUTHOR = {Pallara, Diego},
     TITLE = {Functions of bounded variation and free discontinuity
              problems},
    SERIES = {Oxford Mathematical Monographs},
 PUBLISHER = {The Clarendon Press, Oxford University Press, New York},
      YEAR = {2000},
     PAGES = {xviii+434},
      ISBN = {0-19-850245-1},
   MRCLASS = {49-02 (49J45 49K10 49Qxx)},
  MRNUMBER = {1857292},
}

\bib{MR2728706}{article}{
   author={Barchiesi, M.},
   author={Kang, S. H.},
   author={Le, T. M.},
   author={Morini, M.},
   author={Ponsiglione, M.},
   title={A variational model for infinite perimeter segmentations based on
   Lipschitz level set functions: denoising while keeping finely oscillatory
   boundaries},
   journal={Multiscale Model. Simul.},
   volume={8},
   date={2010},
   number={5},
   pages={1715--1741},
   issn={1540-3459},
   review={\MR{2728706}},
   doi={10.1137/090773659},
}
	
\bib{MR2798533}{book}{
   author={Bauschke, H. H.},
   author={Combettes, P. L.},
   title={Convex analysis and monotone operator theory in Hilbert spaces},
   series={CMS Books in Mathematics/Ouvrages de Math\'ematiques de la SMC},
   note={With a foreword by H\'edy Attouch},
   publisher={Springer, New York},
   date={2011},
   pages={xvi+468},
   isbn={978-1-4419-9466-0},
   review={\MR{2798533}},
   doi={10.1007/978-1-4419-9467-7},
}

\bib{cdnv}{article}{
    AUTHOR = {Cesaroni, A.},
        author={Dipierro, S.},
    author={Novaga, M.},
        author={Valdinoci, E.},
     TITLE = {Minimizers for nonlocal perimeters of Minkowski type},
    journal={
    To appear in Calc. Var. Partial Differential Equations,
    arxiv preprint 2017, https://arxiv.org/abs/1704.03195 },    
}

\bib{cn}{article}{
    AUTHOR = {Cesaroni, A.},
    author={Novaga, M.},
     TITLE = {Isoperimetric problems for a nonlocal perimeter of Minkowski
              type},
   JOURNAL = {Geom. Flows},
    VOLUME = {2},
      YEAR = {2017},
     PAGES = {86--93},
      ISSN = {2353-3382},
   review={\MR{3733869}},
       URL = {https://doi.org/10.1515/geofl-2017-0003},
}

\bib{MR2655948}{article}{
   author={Chambolle, A.},
   author={Giacomini, A.},
   author={Lussardi, L.},
   title={Continuous limits of discrete perimeters},
   journal={M2AN Math. Model. Numer. Anal.},
   volume={44},
   date={2010},
   number={2},
   pages={207--230},
   issn={0764-583X},
   review={\MR{2655948}},
   doi={10.1051/m2an/2009044},
}

\bib{MR3187918}{article}{
   author={Chambolle, A.},
   author={Lisini, S.},
   author={Lussardi, L.},
   title={A remark on the anisotropic outer Minkowski content},
   journal={Adv. Calc. Var.},
   volume={7},
   date={2014},
   number={2},
   pages={241--266},
   issn={1864-8258},
   review={\MR{3187918}},
   doi={10.1515/acv-2013-0103},
}

\bib{MR3023439}{article}{
   author={Chambolle, A.},
   author={Morini, M.},
   author={Ponsiglione, M.},
   title={A nonlocal mean curvature flow and its semi-implicit time-discrete
   approximation},
   journal={SIAM J. Math. Anal.},
   volume={44},
   date={2012},
   number={6},
   pages={4048--4077},
   issn={0036-1410},
   review={\MR{3023439}},
   doi={10.1137/120863587},
}

\bib{MR3401008}{article}{
  author={Chambolle, A.},
   author={Morini, M.},
   author={Ponsiglione, M.},
   title={Nonlocal curvature flows},
   journal={Arch. Ration. Mech. Anal.},
   volume={218},
   date={2015},
   number={3},
   pages={1263--1329},
   issn={0003-9527},
   review={\MR{3401008}},
   doi={10.1007/s00205-015-0880-z},
}

\bib{COURANT77}{book}{
   author={Courant, Richard},
   title={Dirichlet's principle, conformal mapping, and minimal surfaces},
   note={With an appendix by M. Schiffer;
   Reprint of the 1950 original},
   publisher={Springer-Verlag, New York-Heidelberg},
   date={1977},
   pages={xi+332},
   isbn={0-387-90246-5},
   review={\MR{0454858}},
}

%%\bib{2016arXiv160503794C}{article}{
%%   author={Cozzi, M.},
%%   author={Dipierro, S.},
%%   author={Valdinoci, E.},
%%   title={Nonlocal phase transitions in homogeneous and periodic media},
%%   journal={J. Fixed Point Theory Appl.},
%%   volume={19},
%%   date={2017},
%%   number={1},
%%   pages={387--405},
%%   issn={1661-7738},
%%   review={\MR{3625078}},
%%   doi={10.1007/s11784-016-0359-z},
%%} 

\bib{dnv}{article}{
   author={Dipierro, S.},
      author={Novaga, M.},
   author={Valdinoci, E.},
   title={On a Minkowski geometric flow in the plane},
   journal={arxiv preprint 2017, https://arxiv.org/abs/1710.05236},
}

\bib{MR1930621}{article}{
   author={Novaga, M.},
   author={Paolini, E.},
   title={Regularity results for some 1-homogeneous functionals},
   journal={Nonlinear Anal. Real World Appl.},
   volume={3},
   date={2002},
   number={4},
   pages={555--566},
   issn={1468-1218},
   review={\MR{1930621}},
   doi={10.1016/S1468-1218(01)00048-7},
}

\bib{MR3046979}{article}{
   author={Valdinoci, E.},
     TITLE = {A fractional framework for perimeters and phase transitions},
   JOURNAL = {Milan J. Math.},
  FJOURNAL = {Milan Journal of Mathematics},
    VOLUME = {81},
      YEAR = {2013},
    NUMBER = {1},
     PAGES = {1--23},
      ISSN = {1424-9286},
   MRCLASS = {49Q05},
  MRNUMBER = {3046979},
       DOI = {10.1007/s00032-013-0199-x},
       URL = {http://dx.doi.org/10.1007/s00032-013-0199-x},
}

%\bib{MR3381284}{book}{
%   author={Wheeden, R. L.},
%   author={Zygmund, A.},
%   title={Measure and integral},
%   series={Pure and Applied Mathematics (Boca Raton)},
%   edition={2},
%   note={An introduction to real analysis},
%   publisher={CRC Press, Boca Raton, FL},
%   date={2015},
%   pages={xvii+514},
%   isbn={978-1-4987-0289-8},
%   review={\MR{3381284}},
%}

\end{biblist}

\end{document}